\definecolor{darkgreen}{rgb}{0,0.5,0}
\definecolor{darkred}{rgb}{0.7,0,0}
\theoremstyle{plain}
\newtheorem{lemma}{Lemma}[section]
\newtheorem{theorem}[lemma]{Theorem}
\newtheorem{proposition}[lemma]{Proposition}
\theoremstyle{definition}
\newtheorem{definition}[lemma]{Definition}
\numberwithin{equation}{section}
\newcommand{\ga}{\gamma}
\newcommand{\de}{\delta}
\newcommand{\De}{\Delta}
\newcommand{\om}{\omega}
\newcommand{\ka}{\kappa}
\renewcommand{\th}{\theta}
\newcommand{\R}{\ensuremath{{\mathbb R}}}
\let\div\relax
\DeclareMathOperator{\div}{div}
\newcommand{\beq}{\begin{equation}}
\newcommand{\eeq}{\end{equation}}
\newcommand{\beqs}{\begin{equation*}}
\newcommand{\eeqs}{\end{equation*}}
\newcommand{\beqa}{\begin{equation}\begin{aligned}}
\newcommand{\eeqa}{\end{aligned}\end{equation}}
\newcommand{\beqas}{\begin{equation*}\begin{aligned}}
\newcommand{\eeqas}{\end{aligned}\end{equation*}}
\newcommand{\half}{\frac{1}{2}}
\newcommand{\eps}{\varepsilon}
\newcommand{\supp}{\text{supp}} 
\title[Spherically Symmetric Solutions of the Euler Equations]{{\sc
Spherically Symmetric Solutions of the Multi-dimensional, Compressible, Isentropic Euler Equations
}
\\ 
}
\author[Matthew R. I. Schrecker]{Matthew R. I. Schrecker}\thanks{University of Wisconsin Madison, Van Vleck Hall, 480 Lincoln Drive, Madison, Wisconsin 53706, USA\\
\indent Email: schrecker@wisc.edu}
\begin{document}
\begin{abstract}
In this paper, we prove the existence of finite-energy weak solutions to the compressible, isentropic Euler equations given arbitrary spherically symmetric initial data of finite energy. In particular, we show that the solutions obtained to the spherically symmetric Euler equations in the recent works by Chen-Perepelitsa and Chen-Schrecker, \cite{ChenPerep2, ChenSchrecker}, are weak solutions of the multi-dimensional compressible Euler equations. This follows from new uniform estimates made on artificial viscosity approximations up to the origin, removing previous restrictions on admissible test functions and ruling out formation of an artificial boundary layer at the origin. The uniform estimates may be of independent interest concerning the possible rate of blow-up of density and velocity at the origin for spherically symmetric flows.
\end{abstract}

\maketitle

\section{Introduction and Main Result}
The spherically symmetric, isentropic Euler equations have been a subject of active interest since at least the 1940s. In several pioneering works, especially those of Guderley \cite{Guderley}, \textit{cf}.~Courant and Friedrichs \cite{CourantFriedrichs}, certain special solutions were analysed, giving evidence of the possibility of finite-time blow-up of the density and velocity at the origin for  spherically symmetric solutions (see also the recent work of Jenssen and Tsikkou \cite{Jenssen} for the full Euler system). However, the general question of existence of spherically symmetric solutions of the compressible, isentropic Euler equations for arbitrary spherically symmetric initial data has remained open until now, except for the case excluding the origin, solved by Chen \cite{Chen}. The compressible, isentropic Euler equations in $\R^n$ are
\beq\label{eq:Euler}
\begin{cases}
\partial_t\rho+\div_{\mathbf{x}}(\rho\mathbf{u})=0, &(t,{\mathbf{x}})\in\R_+\times\R^n,\\
\partial_t(\rho\mathbf{u})+\div_{\mathbf{x}}\big(\rho\mathbf{u}\otimes\mathbf{u}\big)+\nabla_{\mathbf{x}} p(\rho)=0, &(t,{\mathbf{x}})\in\R_+\times\R^n,
\end{cases}
\eeq
where $\rho:\R_+\times\R^n\to\R$ is the density of a given fluid (and hence $\rho\geq 0$), $\mathbf{u}:\R_+\times\R^n\to\R^n$ is its velocity, and the scalar function $p(\rho)\geq 0$ is the pressure. We write $\R_+=(0,\infty)$ throughout. In this work, we will consider the pressure laws given by the equation of state of a polytropic gas, that is $p(\rho)=\ka\rho^\ga$ for some $\ga\in(1,\infty)$ and $\ka>0$. By appropriate scaling, we assume without loss of generality that $\ka=(\ga-1)^2/4\ga$.

We consider the Cauchy problem for \eqref{eq:Euler} by imposing initial data
\beq\label{eq:multiDCauchydata}
(\rho,\mathbf{u})|_{t=0}=(\rho_0,\mathbf{u}_0).
\eeq
We recall that a pair $(\rho,\mathbf{u})$ is said to be of finite energy for the Euler equations if
$$E_*[\rho,\mathbf{u}]:=\int_{\R^n}\big(\half\rho|\mathbf{u}|^2+\frac{\ka\rho^\ga}{\ga-1}\big)\,d{\mathbf{x}}<\infty.$$
\begin{definition}\label{def:multiDsoln}
Let $(\rho_0,\mathbf{u}_0)\in L^1_{loc}(\R^n;\R^{n+1})$ be of finite energy, $\rho_0\geq 0$. We say a pair of functions $(\rho,\mathbf{u})\in L^1_{loc}(\R_+\times\R^n;\R^{n+1})$ with $\rho\geq 0$ is a \textit{finite-energy weak solution} of \eqref{eq:Euler}--\eqref{eq:multiDCauchydata} if $E_*[\rho,\mathbf{u}](t)<\infty$ for almost every $t>0$ and, for all $\varphi\in C_c^\infty([0,\infty)\times\R^n)$,
\beqs
\int_0^\infty\int_{\R^n}\big(\rho\varphi_t+\rho\mathbf{u}\cdot \nabla_{\mathbf{x}}\varphi\big)\,d{\mathbf{x}}\,dt+\int_{\R^n}\rho_0({\mathbf{x}})\varphi(0,{\mathbf{x}})\,d{\mathbf{x}}=0,
\eeqs
and, for all $\mathbf{\varphi}\in C_c^\infty([0,\infty)\times\R^n;\R^n)$,
\beqs
\int_0^\infty\int_{\R^n}\big(\rho\mathbf{u}\cdot\mathbf{\varphi}_t+(\rho\mathbf{u}\otimes\mathbf{u}): \nabla_{\mathbf{x}}\mathbf{\varphi}+p(\rho)\div_{\mathbf{x}}\mathbf{\varphi}\big)\,d{\mathbf{x}}\,dt+\int_{\R^n}\rho_0\mathbf{u}_0(\mathbf{x})\cdot\mathbf{\varphi}(0,{\mathbf{x}})\,d{\mathbf{x}}=0.
\eeqs
\end{definition}
For spherically symmetric motion, there exist scalar functions $\rho(t,r)$ and $u(t,r)$, where $r=|\mathbf{x}|$, such that
\beq\label{eq:sphericalconversion}
\rho(t,{\mathbf{x}})=\rho(t,r),\qquad \mathbf{u}(t,{\mathbf{x}})=u(t,r)\frac{{\mathbf{x}}}{|{\mathbf{x}}|}.
\eeq
Then, defining the momentum $m=\rho u$, the Euler equations \eqref{eq:Euler} take the form 
\beq\label{eq:Eulersphsym mform}
\begin{cases}
\big(r^{n-1}\rho\big)_t+\big(r^{n-1}m\big)_r=0, &(t,r)\in\R_+\times\R_+,\\
\big(r^{n-1}m\big)_t+\big(r^{n-1}\frac{m^2}{\rho}\big)_r+r^{n-1}p(\rho)_r=0, &(t,r)\in\R_+\times\R_+.
\end{cases}
\eeq
\begin{definition}\label{def:sphsymsoln}
Let $(\rho_0,m_0)\in L^1_{loc}(\R_+;\R^2)$ with $\rho_0\geq 0$ and $m_0=\rho_0u_0$ be of finite-energy, i.e.
$$E[\rho_0,m_0]:=\int_0^\infty \big(\half\frac{m_0^2}{\rho_0}+\frac{\ka\rho_0^\ga}{\ga-1}\big)r^{n-1}\,dr=E_0<\infty.$$
Then a pair of functions $(\rho,m)\in L^1_{loc}(\R_+^2;\R^2)$ with $\rho\geq0$ is a \textit{finite-energy weak solution} of the spherically symmetric Euler equations \eqref{eq:Eulersphsym mform} with initial data $(\rho_0,m_0)$ if $E[\rho,m](t)<\infty$ for almost every $t>0$ and, for all $\varphi\in C_c^\infty([0,\infty)^2)$,
\beq\label{eq:weakformcty}
\int_0^\infty\int_0^\infty\big(\rho\varphi_t+m\varphi_r\big) r^{n-1}\,dr\,dt+\int_0^\infty \rho_0(r)\varphi(0,r)r^{n-1}\,dr=0,
\eeq
and, for all $\varphi\in C_c^\infty([0,\infty)^2)$ such that $\varphi(t,0)=0$ for all $t\geq 0$,
\beqa\label{eq:weakformmomentum}
\int_0^\infty\int_0^\infty \Big(m\varphi_t+\frac{m^2}{\rho}\varphi_r+p(\rho)\big(\varphi_r+&\frac{n-1}{r}\varphi\big)\Big)r^{n-1}\,dr\,dt\\&+\int_0^\infty m_0(r)\varphi(0,r)r^{n-1}\,dr=0.
\eeqa
\end{definition}
The formulations of Definition \ref{def:multiDsoln} and Definition \ref{def:sphsymsoln} are equivalent via \eqref{eq:sphericalconversion} (see Appendix and e.g.~\cite[Theorem 5.7]{Hoff} for details).
The main result of this note is 
\begin{theorem}\label{theorem}
Suppose $p(\rho)=\ka\rho^\ga$, $\ga>1$. Let $(\rho_0,\mathbf{u}_0)\in L^1_{loc}(\R^n;\R^{n+1})$, $\rho_0\geq 0$, be spherically symmetric data of finite energy. Then there exists a spherically symmetric finite-energy weak solution $(\rho,\mathbf{u})$ of the Euler equations \eqref{eq:Euler}--\eqref{eq:multiDCauchydata} in the sense of Definition \ref{def:multiDsoln}.

In particular, there exist functions $\rho(t,r)$ and $u(t,r)$ such that
\beq
\rho(t,\mathbf{x})=\rho(t,r),\qquad \mathbf{u}(t,\mathbf{x})=u(t,r)\frac{\mathbf{x}}{|\mathbf{x}|},
\eeq
where $(\rho(t,r),m(t,r))$ with $m=\rho u$ is a finite-energy weak solution of the spherically symmetric Euler equations \eqref{eq:Eulersphsym mform} in the sense of Definition \ref{def:sphsymsoln}.
\end{theorem}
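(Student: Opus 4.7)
Because the Appendix records the equivalence between the multi-dimensional weak formulation of Definition~\ref{def:multiDsoln} and the spherical one of Definition~\ref{def:sphsymsoln}, the task reduces to producing $(\rho,m)$ satisfying \eqref{eq:weakformcty}--\eqref{eq:weakformmomentum} for the \emph{full} class of admissible test functions, namely arbitrary $\varphi\in C_c^\infty([0,\infty)^2)$ with $\varphi(t,0)=0$. The constructions of \cite{ChenPerep2,ChenSchrecker} yield such a pair, but only against $\varphi$ supported strictly away from $\{r=0\}$, so my strategy is to revisit those artificial-viscosity approximations and upgrade the compactness argument with new uniform bounds valid all the way up to the origin.

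First I would recall the viscous scheme, in which $(\rho^\eps,m^\eps)$ solves a Navier--Stokes-type system on a radial truncated domain $[a^\eps,b^\eps]$ with $a^\eps\downarrow 0$, $b^\eps\uparrow\infty$, together with solid-wall type boundary conditions at $r=a^\eps$ and vanishing viscosity $\eps\downarrow 0$. The energy, higher-integrability, and compensated-compactness estimates of \cite{ChenPerep2,ChenSchrecker} already deliver a.e.\ convergence to a finite-energy pair $(\rho,m)$ fulfilling \eqref{eq:weakformcty}--\eqref{eq:weakformmomentum} against test functions whose support avoids $\{r=0\}$; this will serve as the starting point, and the question becomes how to enlarge the class of test functions against which one can pass to the limit.

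The main work, and what I expect to be the principal obstacle, is producing new uniform estimates that simultaneously (a) quantify the possible concentration of mass, momentum, and pressure at the origin and (b) preclude an artificial boundary layer at $r=a^\eps$. Concretely, I would aim for a bound of the shape
\beqs
\sup_{\eps}\int_0^T\!\!\int_{a^\eps}^{\delta}\Big(\rho^\eps|u^\eps|^2+(\rho^\eps)^\ga\Big) r^{n-1}\,dr\,dt\xrightarrow[\delta\downarrow 0]{} 0,
\eeqs
together with control of the viscous normal flux on $\{r=a^\eps\}$. These should be extracted by testing the approximate momentum equation against a family of radial multipliers localised near the origin (cutoffs $\chi(r/\delta)$ and weights of power type $r^\alpha$), exploiting both the positivity of the viscous dissipation and the favourable sign of the geometric source $(n-1)p(\rho^\eps)/r$ when paired against increasing weights. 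The exponent $\alpha$ must be tuned to $\ga$ and $n$ so that the estimate closes; this is where the condition $\ga>1$ enters quantitatively, and where most of the new analytical content will reside.

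With such bounds in hand, I would pass to the limit $\eps\downarrow 0$ in the viscous weak identity against an arbitrary $\varphi\in C_c^\infty([0,\infty)^2)$ with $\varphi(t,0)=0$. Smoothness and the vanishing condition give $|\varphi(t,r)|\leq r\,\|\partial_r\varphi\|_\infty$, so the geometric pressure term $(n-1)\varphi\, p(\rho^\eps)\,r^{n-2}$ is dominated by $(n-1)\|\partial_r\varphi\|_\infty(\rho^\eps)^\ga r^{n-1}$ and is uniformly integrable by the global energy. The new uniform bounds ensure that contributions from both a shrinking neighbourhood of $r=0$ and the viscous boundary at $r=a^\eps$ vanish in the limit, so $(\rho,m)$ satisfies \eqref{eq:weakformmomentum} for the full admissible test class, and \eqref{eq:weakformcty} follows by an analogous (and easier) argument. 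The Appendix equivalence then delivers the multi-dimensional weak solution.
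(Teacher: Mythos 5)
Your overall strategy --- begin from the artificial-viscosity scheme of Chen--Perepelitsa/Chen--Schrecker, prove new uniform estimates near $r=0$, then pass to the limit in \eqref{eq:weakformcty}--\eqref{eq:weakformmomentum} for arbitrary test functions vanishing at the origin, and finally invoke the Appendix equivalence --- is exactly the structure of the paper's proof, and the observation $|\varphi(t,r)|\le r\|\varphi_r\|_{L^\infty}$ for handling the geometric pressure term $(n-1)p(\rho^\eps)\varphi\,r^{n-2}$ is also the right one. The gaps are in the two places where the analytical content actually lives.

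First, the core estimate you aim for,
$\sup_\eps\int_0^T\int_{a^\eps}^\delta(\rho^\eps|u^\eps|^2+(\rho^\eps)^\ga)r^{n-1}\,dr\,dt\to 0$ as $\delta\to 0$,
is equi-integrability of the flux near the origin, which is indeed what is needed, but the proposed method of extracting it --- testing the momentum equation against radial multipliers $\chi(r/\delta)$ and powers $r^\alpha$, and exploiting a ``favourable sign'' of the geometric source --- would not close. When you multiply the momentum equation by $g(r)$ and integrate by parts, the quantity you produce is $\int g'(r)\bigl(\rho u^2+p(\rho)\bigr)r^{n-1}$ (together with $\int g\,p(\rho)\,r^{n-2}$ and a time-derivative of $\int g\,m\,r^{n-1}$); this is the \emph{same} energy density that Proposition~\ref{prop:mainenergy} already controls in $L^1$, with no gain of integrability, and the signs of the two spatial integrals do not cooperate for any monotone choice of $g$. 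Uniform $L^1$ bounds do not rule out concentration at $r=0$; you need a genuine \emph{higher} integrability gain. The paper obtains this from the weak entropy pair $(\tilde\eta,\tilde q)$ of Lions--Perthame--Tadmor: the entropy flux satisfies the pointwise lower bound $\tilde q\gtrsim \rho|u|^3+\rho^{\ga+\th}-M(\rho|u|^2+\rho+\rho^\ga)$, and integrating the entropy balance over $(0,T)\times(r,b)$ (and then over $r$) yields Lemma~\ref{lemma:highinetgrabilityorigin}, $\int_0^T\int_a^b(\rho^\eps|u^\eps|^3+(\rho^\eps)^{\ga+\th})\om\,r^{n-1}\,dr\,dt\le M$. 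It is this extra power (e.g., $\rho|u|^3$ versus $\rho|u|^2$) that converts uniform boundedness into equi-integrability, and I see no way to reproduce it with purely spatial test-function multipliers. The proof of Lemma~\ref{lemma:highinetgrabilityorigin} also needs the non-positivity of $-\check q+m\check\eta_\rho+\tfrac{m^2}{\rho}\check\eta_m$ and a clever handling of the boundary term $\eps\int_0^T\rho^\ga(t,a)a^{n-1}\,dt$ via the fundamental theorem of calculus, none of which your plan touches.

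Second, you mention controlling the viscous ``normal flux'' at $r=a^\eps$, but the boundary conditions there are $\rho_r=m=0$ and it is not the boundary that is the problem. The difficulty is the \emph{bulk} viscous term $\eps\int\rho^\eps_r\varphi_r\,r^{n-1}$: the energy estimate only controls $\eps\int(\rho^\eps)^{\ga-2}|\rho^\eps_r|^2\,r^{n-1}$, and when $\ga>2$ this degenerates near vacuum, so one cannot conclude that $\eps\int\rho^\eps_r\varphi_r\,r^{n-1}\to 0$ directly. The paper proves separate vacuum-weighted estimates (Lemmas~\ref{lemma:rho_r} and \ref{lemma:rho_r/rho}) on $\eps\int\rho_r^2\mathbbm{1}_{\{\rho<\De\}}$ and $\eps\int\tfrac{\rho_r^2}{\rho}\mathbbm{1}_{\{\rho<\De\}}$ near the origin, obtained by testing the \emph{continuity} equation against $\varphi''(\rho)=\mathbbm{1}_{\{\rho<\De\}}$-type nonlinear multipliers; these, under the parameter condition $\sqrt\eps/a\to 0$ in \eqref{ass:assumption}, are what actually kill the viscous terms. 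This is a distinct piece of analysis not captured by your plan.
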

In \cite{ChenPerep2}, Chen--Perepelitsa solved system \eqref{eq:Eulersphsym mform} for weak solutions with a restricted weak formulation for $\ga\in(1,3]$ via a vanishing artificial viscosity method, using the following approximate equations for viscosity $\eps>0$ on a truncated domain, $(t,r)\in(0,T)\times(a(\eps),b(\eps))$,
\beq\label{eq:approx}
\begin{cases}
(r^{n-1}\rho^\eps)_t+(r^{n-1}m^\eps)_r=\eps(r^{n-1}\rho^\eps_r)_r,\\
(r^{n-1}m^\eps)_t+(r^{n-1}\frac{(m^\eps)^2}{\rho^\eps})_r+r^{n-1}p_\de(\rho^\eps)_r=\eps(r^{n-1}m^\eps)_{rr}-\eps\frac{n-1}{r}(r^{n-1}m^\eps)_r,
\end{cases}
\eeq
with smooth approximate initial data 
\beq\label{eq:approxic}
(\rho^\eps,m^\eps)|_{t=0}=(\rho_0^\eps,m_0^\eps)
\eeq
 and mixed Dirichlet/Neumann boundary conditions
\beq\label{eq:approxbcs}
(\rho^\eps_r,m^\eps)|_{r=a}=(0,0),\quad (\rho^\eps,m^\eps)|_{r=b}=(\bar\rho(\eps),0),
\eeq
with $\bar\rho(\eps)\rightarrow0$ as $\eps\to0$, where $p_\de(\rho)=p(\rho)+\de\rho^2$ and $\de\to0$ as $\eps\to0$. Here $a(\eps)\in(0,1)$, $b(\eps)\in(1,\infty)$ for each $\eps>0$ and, as $\eps\to 0$, $a(\eps)\to0$, $b(\eps)\to\infty$.
Subsequently, Chen and the author showed in \cite{ChenSchrecker} how the construction could be extended to cover the full range $\ga\in(1,\infty)$.

In the results of \cite{ChenPerep2,ChenSchrecker}, the weak formulation satisfied by the obtained solution $(\rho,m)$ of \eqref{eq:Eulersphsym mform} required restrictions on the space of admissible test functions. In particular, in \cite{ChenPerep2, ChenSchrecker}, it is required that for both equations in \eqref{eq:Eulersphsym mform} the test function $\varphi\in C_c^\infty([0,\infty)^2)$ additionally satisfies $\varphi_r(t,0)=0$ for all $t$ (as well as the correct condition $\varphi(t,0)=0$ for the test function in the momentum equation). Such an assumption restricts the admissible test functions in the weak formulation of \eqref{eq:Euler} (see Appendix for details), and hence it is unclear whether the obtained solutions are indeed weak solutions of \eqref{eq:Euler} in the proper sense of Definition \ref{def:multiDsoln}. In \cite{ChenPerep2,ChenSchrecker}, the additional assumption on test functions at the origin was used primarily to handle the convergence of the flux term in the momentum equation, see \cite[Section 3.4]{ChenPerep2} for details. In particular, the uniform energy bounds (see Proposition \ref{prop:mainenergy}) provide only $L^1$ bounds on the momentum flux up to the origin, hence do not allow for passage to the limit up to the origin.

In this note, we demonstrate that the solutions do indeed satisfy the correct weak formulation by proving uniform estimates on the approximate solutions  up to the origin, $r=0$, allowing for the passage to the limit with general test functions without additional assumptions and the proof of Theorem \ref{theorem}.\footnote{Since the writing of this note, G.-Q. Chen has informed me in a private correspondence that he and Y. Wang have an alternative proof of the full weak formulation, however without the higher integrability estimate up to the origin.}

Both the works \cite{ChenPerep2} and \cite{ChenSchrecker} showed the convergence of the approximate solutions to a strong limit using the technique of compensated compactness (developed by Tartar \cite{Tartar} and Murat \cite{Murat}) in the finite energy framework initiated by LeFloch--Westdickenberg \cite{LeFlochWestdickenberg} for the Euler equations with geometric effects. This framework was subsequently developed by Chen--Perepelitsa in \cite{ChenPerep} and relies crucially on an estimate for the mechanical energy. 

Before stating our new uniform estimates, we therefore first recall from \cite{ChenPerep2, ChenSchrecker} the main energy estimate.

\begin{proposition}\label{prop:mainenergy}
Let $$E_0:=\sup_\eps\int_a^b\big(\half \rho_0^\eps (u_0^\eps)^2+\overline{h_\de}(\rho^\eps_0,\bar\rho)\big)r^{n-1}\,dr<\infty,$$
where $\overline{h_\de}(\rho,\bar\rho)=h_\de(\rho)-h_\de(\bar\rho)-h_\de'(\bar\rho)(\rho-\bar\rho)$ and $h_\de(\rho)=\frac{\ka\rho^\ga}{\ga-1}+\de\rho^2$.

Then, for each $\eps>0$ and any $T>0$, there exists a unique, smooth solution $(\rho^\eps,m^\eps)$ to \eqref{eq:approx}--\eqref{eq:approxbcs} satisfying also
\beqa
\sup_{t\in[0,T]}&\int_a^b\big(\half \rho^\eps (u^\eps)^2+\overline{h_\de}(\rho^\eps,\bar\rho)\big)r^{n-1}\,dr\\
&+\eps\int_0^T\int_a^b\big(h_\de''(\rho^\eps)|\rho^\eps_r|^2+\rho^\eps|u_r^\eps|^2+(n-1)\frac{\rho^\eps (u^\eps)^2}{r^2}\big)r^{n-1}\,dr\,dt\leq E_0.
\eeqa
\end{proposition}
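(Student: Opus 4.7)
The proof has two components: constructing a unique classical solution to the parabolic initial--boundary value problem \eqref{eq:approx}--\eqref{eq:approxbcs}, and deriving the stated energy estimate. For the first component, after dividing by $r^{n-1}$ the system becomes a strictly parabolic quasilinear system for $(\rho^\eps, m^\eps)$ with principal part $\eps(\partial_{rr}+\tfrac{n-1}{r}\partial_r)$ on the bounded annulus $r\in[a,b]$; since $a(\eps)>0$ the coefficient $1/r$ is bounded and the system is uniformly parabolic so long as $\rho^\eps$ stays positive. Classical quasilinear parabolic theory (e.g.\ Ladyzhenskaya--Solonnikov--Ural'tseva) with the compatible mixed Dirichlet/Neumann data \eqref{eq:approxbcs} yields a unique classical solution locally in time. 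A lower-bound argument for $\rho^\eps$ based on the maximum principle for the viscous continuity equation, combined with the a~priori energy estimate derived below, prevents blow-up and extends the solution to arbitrary $T>0$.

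To derive the energy estimate, I would multiply the continuity equation in \eqref{eq:approx} by $h_\de'(\rho^\eps)-h_\de'(\bar\rho)-\tfrac12(u^\eps)^2$, the momentum equation by $u^\eps$, add, and integrate over $[a,b]$. A direct chain-rule computation shows the time derivatives combine into $\partial_t\big(\tfrac12\rho^\eps(u^\eps)^2+\overline{h_\de}(\rho^\eps,\bar\rho)\big)r^{n-1}$. The inviscid convective terms telescope to a total $r$-derivative whose boundary contributions vanish: at $r=a,b$ the condition $m^\eps=0$ kills the kinetic and pressure-work fluxes, while at $r=b$, $h_\de'(\rho^\eps)-h_\de'(\bar\rho)=0$ kills the remaining internal-energy flux. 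The viscous contribution of the continuity equation, after integration by parts using $\rho^\eps_r|_{r=a}=0$ and the vanishing of the multiplier at $r=b$, yields $-\eps\int h_\de''(\rho^\eps)|\rho^\eps_r|^2 r^{n-1}\,dr$ together with a cross-term $+\eps\int u^\eps u^\eps_r\rho^\eps_r\, r^{n-1}\,dr$ produced by the $-\tfrac12(u^\eps)^2$ piece of the multiplier.

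For the viscous terms in the momentum equation I would use the algebraic identity
\[
(r^{n-1}m)_{rr}-\tfrac{n-1}{r}(r^{n-1}m)_r = r^{n-1}\Big(m_{rr}+\tfrac{n-1}{r}m_r-\tfrac{n-1}{r^2}m\Big),
\]
which is $r^{n-1}$ times the radial component of the vector Laplacian of $u^\eps\mathbf{x}/|\mathbf{x}|$. Testing against $u^\eps$, integrating by parts on $[a,b]$ with $m^\eps|_{r=a,b}=0$, and expanding $m^\eps_r=\rho^\eps_r u^\eps+\rho^\eps u^\eps_r$ produces precisely the opposite cross-term $-\eps\int u^\eps u^\eps_r\rho^\eps_r\, r^{n-1}\,dr$, together with the remaining dissipation $-\eps\int\big(\rho^\eps|u^\eps_r|^2+(n-1)\rho^\eps(u^\eps)^2/r^2\big)r^{n-1}\,dr$. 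The main difficulty is precisely this algebraic bookkeeping, which amounts to verifying the compatibility of the viscous regularization with the convex mechanical-energy entropy $\eta_*$; the cross-terms cancel exactly, leaving the manifestly nonnegative dissipation. Integrating in time from $0$ to $T$ and invoking the initial energy hypothesis then yields the stated inequality.
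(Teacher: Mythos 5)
Your proposal is correct and follows essentially the same route as the proof in the cited works \cite{ChenPerep2, ChenSchrecker} (this paper merely recalls the proposition without reproving it). The choice of multipliers $\big(h_\de'(\rho^\eps)-h_\de'(\bar\rho)-\tfrac12(u^\eps)^2,\ u^\eps\big)$ is exactly $\nabla\eta$ for the relative mechanical energy $\eta=\tfrac12\rho u^2+\overline{h_\de}(\rho,\bar\rho)$, the entropy--entropy flux compatibility makes the inviscid terms combine into $(r^{n-1}\eta)_t+(r^{n-1}q)_r$ with $q=m\bigl(\tfrac12 u^2+h_\de'(\rho)-h_\de'(\bar\rho)\bigr)$ vanishing at both endpoints because $m=0$ there, and your bookkeeping of the viscous cross-term cancellation and of the remaining dissipation $\eps\bigl(h_\de''|\rho_r|^2+\rho|u_r|^2+(n-1)\rho u^2/r^2\bigr)r^{n-1}$ is exactly right. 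One small caveat worth flagging, purely for completeness: the a~priori energy estimate alone does not yield the positive lower bound on $\rho^\eps$ needed to continue the solution past a finite time; that requires pointwise (maximum-principle / invariant-region) control on $u^\eps_r$ and $\rho^\eps$, which involves higher-order bootstrap estimates beyond the energy inequality. You gesture at this but it is the one step that is not ``routine from what is written''; the cited papers carry it out in detail.
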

For future use, we note that, for $\bar\rho$ bounded, as $\rho$ grows large, $\overline{h_\de}(\rho,\bar\rho)$ grows as $\rho^\ga$. Hence we have the easy estimate for all $\rho\geq0$,
\beq\label{ineq:star}
\rho+\rho^\ga\leq M(\overline{h_\de}(\rho,\bar\rho)+1).
\eeq
To make our uniform estimates, 
we suppose there exists $M_0>0$, independent of $\eps$, such that
\beq\label{ass:assumptionintro}
\eps\frac{b^n}{a}+\de|\log(a)|\big(1+\frac{b^n}{\eps}\big)+\bar\rho^\th|\log(a)|+\bar\rho^{\ga}b^{n}+\frac{\sqrt{\eps}}{a}\leq M_0.
\eeq
This can always be ensured by careful selection of $\de,\bar\rho,b,a$ depending on $\eps>0$.

The main new uniform estimate that we prove is a higher integrability estimate for both density and velocity. We write $\th=\frac{\ga-1}{2}$, so that $\th>0$ for all $\ga>1$.
\begin{lemma}\label{lemma:highinetgrabilityorigin}
Suppose $(\rho^\eps,m^\eps)$ is a smooth solution of \eqref{eq:approx}--\eqref{eq:approxbcs} on $[0,T]\times(a(\eps),b(\eps))$ with $\inf\rho^\eps=c_\eps>0$ (where $c_\eps$ may depend on $\eps$ and $T$) and that $\eps,\de,a,b,\bar\rho$ satisfy assumption \eqref{ass:assumptionintro}. Let $\om\in C_c^\infty([0,\infty))$ be a test function such that $\om(r)=1$ for $r\in[0,1]$ and $\om(r)\geq0$. Then there exists a constant $M>0$, independent of $\eps$ but depending on $M_0$, such that
\beq
\int_0^T\int_a^b \big(\rho^\eps|u^\eps|^3+(\rho^\eps)^{\ga+\th}\big)\om(r)r^{n-1}\,dr\,dt\leq M.
\eeq
\end{lemma}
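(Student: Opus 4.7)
The plan is to extend the higher-integrability arguments of \cite{ChenPerep2, ChenSchrecker}, which yielded analogous bounds only on compact subsets away from the origin, all the way down to $r=0$. The essential new input will be the control of boundary contributions at $r=a(\eps)$ afforded by the boundary conditions \eqref{eq:approxbcs} combined with the scaling assumption \eqref{ass:assumptionintro}.

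The natural strategy is to test the momentum equation in \eqref{eq:approx} against an appropriate multiplier that, after integration by parts, extracts $\om \rh^\eps |u^\eps|^3 r^{n-1}$ from the convective flux and $\om (\rh^\eps)^{\ga+\th} r^{n-1}$ from the pressure flux. Two natural candidates appear: a local multiplier of the form $\om(r) u^\eps$, which, paired against $m^\eps = \rh^\eps u^\eps$, naturally produces the cubic kinetic quantity, and a nonlocal spatial potential such as
\beqs
\Phi^\eps(t,r) := \int_a^r \om(s)\,(\rh^\eps)^\th(t,s)\,s^{n-1}\,ds,
\eeqs
whose $r$-derivative $\om\,(\rh^\eps)^\th\, r^{n-1}$ couples with $r^{n-1}p_\de(\rh^\eps)_r$ after an integration by parts to produce the $(\rh^\eps)^{\ga+\th}$ term. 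In both cases, the vanishing of the multiplier at $r=a$ is crucial, and is consistent with the boundary data $m^\eps|_{r=a}=0$. I would then multiply the momentum equation by this multiplier, integrate by parts in $r$ and in $t$, and use the continuity equation in \eqref{eq:approx} to eliminate $\partial_t$-derivatives falling on the multiplier. The remaining bulk terms are controlled by Proposition \ref{prop:mainenergy} and the elementary inequality \eqref{ineq:star}, which convert energy density into $L^1$-bounds for $\rh^\eps+(\rh^\eps)^\ga$. The correction terms generated by the artificial pressure $p_\de - p$, the nonzero boundary density $\bar\rh$, and the viscosity are then absorbed using the smallness conditions in \eqref{ass:assumptionintro}.

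The principal obstacle is twofold. First, the convective flux $\rh u^2$ is only quadratic in $u$, so producing the cubic bound $\rh |u|^3$ requires carefully exploiting the coupling between the convective and pressure fluxes through the multiplier, while simultaneously accounting for the geometric source $\tfrac{n-1}{r}\tfrac{(m^\eps)^2}{\rh^\eps}$ in the momentum equation, which is singular as $r\to 0$. Second, the viscosity terms in \eqref{eq:approx}, after integration by parts against the multiplier, produce boundary and bulk contributions involving $\eps\rh^\eps_r$ and $\eps m^\eps_r$ near $r=a$; these must be absorbed either into the dissipation estimate supplied by Proposition \ref{prop:mainenergy} or into the quantities $\sqrt{\eps}/a$, $\eps b^n/a$, etc., whose smallness is prescribed by \eqref{ass:assumptionintro}. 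The novelty relative to \cite{ChenPerep2, ChenSchrecker} is precisely the verification that, under \eqref{ass:assumptionintro}, these boundary contributions at $r=a$ are uniformly bounded, thereby dispensing with the cutoff away from $r=0$ used in the previous works and enabling the uniform bound stated in the lemma.
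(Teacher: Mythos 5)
Your proposal outlines a strategy but, as you yourself acknowledge in the ``principal obstacle'' discussion, it does not actually produce the cubic kinetic bound. This is a genuine gap, and it is not a minor technicality: neither of the two multipliers you suggest yields $\rho^\eps|u^\eps|^3$. Testing the momentum equation with $\om(r)u^\eps$ and integrating by parts in time gives back the kinetic energy $\half\rho u^2$, not $\rho|u|^3$; and pairing the convective flux $\partial_r(r^{n-1}\rho u^2)$ against $\om u$ (after integrating by parts in $r$) produces $\rho u^2(\om u)_r$, which contains $\rho u^2 u_r$ with no sign or coercivity. Your nonlocal potential $\Phi^\eps$ would extract $\rho^{\ga+\th}$ from the pressure flux, but paired against the convective flux it yields $\rho^{1+\th}u^2$, again not the cubic term; moreover, commuting $\partial_t$ through $\Phi^\eps$ via the continuity equation introduces a new layer of nonlocal commutator terms that you have not shown how to close.

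The missing idea is that the cubic estimate does not come from a clever algebraic multiplier for the momentum equation alone, but from the specially constructed Lions--Perthame--Tadmor weak entropy pair $(\tilde\eta,\tilde q)$. The crucial structural fact (recorded in \eqref{ineq:tildeentropybds}) is that the entropy \emph{flux} satisfies the coercive lower bound $\tilde q\geq M^{-1}(\rho|u|^3+\rho^{\ga+\th})-M(\rho|u|^2+\rho+\rho^\ga)$, so both target quantities appear simultaneously with the right sign and weight. The paper multiplies the system by $\tilde\eta_\rho r^{n-1}$, $\tilde\eta_m r^{n-1}$, integrates the resulting entropy identity over $(0,T)\times(r,b)$, applies this lower bound pointwise in $r$, and then averages against $\om(r)$ on $(a,b)$. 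Only \emph{after} this entropy step do the ingredients you correctly identify come in: the Hessian bound \eqref{ineq:entropyhessian} and identity \eqref{eq:physicalhessian} to absorb $\eps\nabla^2\tilde\eta$ into the dissipation of Proposition \ref{prop:mainenergy}, and assumption \eqref{ass:assumptionintro} together with the boundary conditions at $r=a$ to control the logarithmic and $\eps/a$ singularities (the most delicate being the boundary term $\eps\int_0^T\rho^\ga(t,a)a^{n-1}\,dt$, which is handled by the fundamental theorem of calculus plus Lemma \ref{lemma:rho^gammar^l}). So while your instincts about where the difficulties lie (boundary terms at $a$, geometric source, scaling assumption) are sound, the proposal lacks the essential mechanism that generates the $\rho|u|^3$ and $\rho^{\ga+\th}$ integrands in the first place.
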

This estimate gives us the equi-integrability of the flux term $\big(\rho^\eps (u^\eps)^2+p(\rho^\eps)\big)r^{n-1}$ in system \eqref{eq:approx} all the way up to the origin, $r=0$, and hence allows for the passage to the limit.

The other uniform estimates that we require  concern the spatial derivative of $\rho^\eps$ near the origin, appropriately weighted with the viscosity. These are stated in Lemmas \ref{lemma:rho_r}--\ref{lemma:rho_r/rho} below and are designed to prove the convergence of the viscous terms to zero as $\eps\to0$. 

The structure of this note is as follows. First, in \S\ref{sec:integrability}, we prove Lemma \ref{lemma:highinetgrabilityorigin} using a carefully constructed entropy function and precise estimates around $r=0$. Next, in \S\ref{sec:viscosity}, we give the statements and proofs of Lemmas \ref{lemma:rho_r}--\ref{lemma:rho_r/rho} concerning the spatial derivative of the density. Finally, in \S\ref{sec:mainproof}, we conclude the proof of Theorem \ref{theorem}.

\textbf{Acknowledgement:}
The author would like to thank Gui-Qiang Chen and Helge Kristian Jenssen for useful discussions.

\section{Uniform integrability estimates}\label{sec:integrability}
Throughout this section and \S\ref{sec:viscosity}, we suppose that $(\rho,m)$, $m=\rho u$, is a smooth solution of \eqref{eq:approx}--\eqref{eq:approxbcs} such that $\inf_{(a,b)}\rho(t,r)\geq c_\eps(t)>0$. For simplicity of presentation, we omit the superscript $\eps$ from functions in this section. In order to prove the higher integrability estimate of Lemma \ref{lemma:highinetgrabilityorigin} near the origin, we begin by recalling the weak entropy pair $(\check{\eta},\check{q})$ constructed by Lions, Perthame and Tadmor in \cite[Section I]{LionsPerthameTadmor} by the formulae
\beqas
\check{\eta}(\rho,\rho u)=&\int_\R \half s|s|[\rho^{2\th}-(u-s)^2]_+^{\frac{3-\ga}{2(\ga-1)}}\,ds,\\
 \check{q}(\rho,\rho u)=&\int_\R \half s|s|(\th s+(1-\th) u)[\rho^{2\th}-(u-s)^2]_+^{\frac{3-\ga}{2(\ga-1)}}\,ds.
\eeqas We define a modified entropy pair 
\beqas
&\tilde{\eta}(\rho,m)=\check{\eta}(\rho,m)-\nabla\check{\eta}(\bar\rho,0)\cdot(\rho-\bar\rho,m)\geq 0,\\
&\tilde{q}(\rho,m)=\check{q}(\rho,m)-\nabla\check{\eta}(\bar\rho,0)\cdot\big(m,\frac{m^2}{\rho}+p(\rho)\big).
\eeqas
As shown in \cite{LionsPerthameTadmor, ChenPerep2, ChenSchrecker}, for a constant $M>0$ depending only on $\ga\in(1,\infty)$, we have the estimates:
\beqa\label{ineq:tildeentropybds}
&\tilde{q}(\rho,m)\geq \frac{1}{M}\big(\rho|u|^3+\rho^{\ga+\th}\big)-M\big(\rho|u|^2+\rho+\rho^\ga\big),\\
&-\check{q}+m(\check{\eta}_\rho+u\check{\eta}_m)\leq 0,\\
&|\check{\eta}_m|\leq M(|u|+\rho^\th),\quad |\check{\eta}_\rho|\leq M(|u|^2+\rho^{2\th}),\\
&|\tilde{\eta}|+\rho|\tilde{\eta}_\rho+u\tilde{\eta}_m|\leq M(\rho |u|^2+\rho+\rho^\ga), 
\eeqa
and, considering $\check{\eta}_\rho+u\check{\eta}_m$ and $\tilde\eta_m$ as functions of $\rho$ and $u$,
\begin{align}
&|(\check{\eta}_\rho+u\check{\eta}_m)_\rho|\leq M(\rho^{\th-1}|u|+\rho^{2\th-1}),\quad &&|(\check{\eta}_\rho+u\check{\eta}_m)_u|\leq M(|u|+\rho^{\th}),\label{ineq:eta_rho+ueta_m}\\
&|(\tilde\eta_m)_\rho|\leq M\rho^{\th-1},\quad &&|(\tilde\eta_m)_u|\leq M.\label{ineq:tildeetamderivatives}
\end{align}
Also, as $\tilde\eta_m=\check\eta_m-\check\eta_m(\bar\rho,0)$, we use \eqref{ineq:tildeentropybds} to verify by Cauchy--Schwarz
\beqa\label{ineq:metam}
|m\tilde\eta_m|\leq&\,M\rho|u|(|u|+\rho^\th+\bar\rho^\th)
\leq  M(\rho u^2+\rho^\ga+\rho\bar\rho^{2\th}).
\eeqa
Moreover, we recall from \cite[Lemma 3.4]{ChenPerep2} that there exists a constant $M>0$, depending only on $\ga>1$, such that for any $(\rho,m)\in\R^2_+$ and $\xi\in\R^2$,
\beq\label{ineq:entropyhessian}|\xi\nabla^2\tilde\eta(\rho,m)\xi^\top|\leq M\xi\nabla^2\eta^*(\rho,m)\xi^\top,\eeq
where $\eta^*(\rho,m)$ is the physical entropy given by
$$\eta^*(\rho,m)=\half\frac{m^2}{\rho}+\frac{\ka\rho^\ga}{\ga-1}.$$
A simple calculation then shows that for smooth functions $\rho$, $m$ with $m=\rho u$,
\beq\label{eq:physicalhessian}
(\rho_r,m_r)\nabla^2\eta^*(\rho,m)(\rho_r,m_r)^\top=\ka\ga\rho^{\ga-2}|\rho_r|^2+\rho|u_r|^2.
\eeq
We also require estimates on the growth of certain $L^p$ norms of the density close to the origin when weighted appropriately.
\begin{lemma}\label{lemma:rho^gammar^l}
There exists $M(\ga)>0$, independent of $\eps$, such that for $l\in\{0,\ldots,n-1\}$, $T>0$,
$$\sup_{t\in[0,T]}\int_r^b\rho(t,y)^\ga y^l\,dy\leq M\big(r^{l+1-n}E_0+\bar\rho^\ga b^{l+1}\big) \text{ for any }r\in[a,b).$$
\end{lemma}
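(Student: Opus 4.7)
My plan is to reduce the estimate directly to the main energy bound of Proposition \ref{prop:mainenergy} using two ingredients: a pointwise comparison between $\rho^\ga$ and the relative entropy $\overline{h_\de}(\rho,\bar\rho)$, and an elementary monomial inequality allowed by the constraint $l \leq n-1$. The argument is pointwise in $t \in [0,T]$, so it suffices to work at a fixed time.

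The first ingredient is the pointwise inequality
$$\rho^\ga \leq C(\ga)\bigl(\overline{h_\de}(\rho,\bar\rho) + \bar\rho^\ga\bigr) \qquad \text{for all } \rho \geq 0.$$
When $\rho \leq 2\bar\rho$ this is immediate since $\rho^\ga \leq 2^\ga \bar\rho^\ga$. When $\rho \geq 2\bar\rho$, a direct expansion of the strictly convex function $h_\de(\rho)=\tfrac{\ka\rho^\ga}{\ga-1}+\de\rho^2$ about $\bar\rho$, combined with the scaling $s=\bar\rho/\rho \leq 1/2$, yields $\overline{h_\de}(\rho,\bar\rho) \geq c(\ga)\rho^\ga$ (the $\de(\rho-\bar\rho)^2$ contribution being nonnegative), which rearranges to the desired bound. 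This is a standard relative-entropy fact that I would spell out in both regimes to keep the constants explicit.

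Granted this, since $l - n + 1 \leq 0$ and $y \geq r$ on the interval $[r,b]$, we have $y^{l-n+1} \leq r^{l-n+1}$, hence $y^l \leq r^{l+1-n}\, y^{n-1}$. Combining the pointwise bound with this monomial inequality gives
\beqas
\int_r^b \rho^\ga y^l\,dy &\leq C\int_r^b \overline{h_\de}(\rho,\bar\rho)\, y^l\,dy + C\bar\rho^\ga \int_r^b y^l\,dy\\
&\leq C r^{l+1-n} \int_r^b \overline{h_\de}(\rho,\bar\rho)\, y^{n-1}\,dy + \tfrac{C}{l+1}\bar\rho^\ga b^{l+1},
\eeqas
and Proposition \ref{prop:mainenergy} bounds the surviving integral by $E_0$, uniformly in $t\in[0,T]$, which is exactly the claimed estimate.

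I do not anticipate any real obstacles. The structural reason the lemma holds with the weight $y^l$ in place of the natural energy weight $y^{n-1}$ is precisely the elementary comparison $y^l \leq r^{l+1-n}\,y^{n-1}$ for $y \geq r$, which requires $l \leq n-1$; the pointwise entropy comparison is a routine convex-analysis fact, and the rest is bookkeeping.
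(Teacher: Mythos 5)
Your proof is correct and, as far as one can tell, takes exactly the route intended: the paper omits the argument and refers to \cite[Lemma 3.1]{ChenPerep2}, but the form of the bound $M(r^{l+1-n}E_0+\bar\rho^\ga b^{l+1})$ essentially forces the two ingredients you use, namely the refined pointwise comparison $\rho^\ga\leq C(\ga)(\overline{h_\de}(\rho,\bar\rho)+\bar\rho^\ga)$ (which tracks the $\bar\rho$-dependence more carefully than the cruder \eqref{ineq:star}) together with the weight conversion $y^l\leq r^{l+1-n}y^{n-1}$ for $y\geq r$, $l\leq n-1$, followed by the energy bound of Proposition \ref{prop:mainenergy}.
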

As the proof is similar to that of \cite[Lemma 3.1]{ChenPerep2}, we omit it here. Finally, we recall the following lemma from \cite{ChenPerep2}.
\begin{lemma}[{\cite[Lemma 3.2]{ChenPerep2}}]\label{lemma:rho^3}
There exists a constant $M=M(T)>0$, independent of $\eps$, such that, for any $r\in [a,b)$,
$$\int_0^T\int_r^b \rho(t,y)^3 y^{n-1}\,dy\,dt\leq M\Big(1+\frac{b^n}{\eps}\Big).$$
\end{lemma}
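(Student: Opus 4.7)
The bound $M(1+b^n/\varepsilon)$, with $1/\varepsilon$ on the right-hand side, signals that the proof must exploit the viscous dissipation from Proposition~\ref{prop:mainenergy}—in particular $\varepsilon\int_0^T\int \rho u_r^2\, r^{n-1}\,dr\,dt\le E_0$—since the energy bound alone cannot yield a uniform $L^3$ bound on $\rho$ for $\gamma<3$. My approach will be to adapt the classical Kazhikhov--Shelukhin higher-integrability argument, originally developed for one-dimensional compressible Navier--Stokes, to the spherically symmetric artificial-viscosity setting of \eqref{eq:approx}.

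The first step is to introduce a ``mass outside radius $r$'' test function
\[
\phi(t,r) := \int_r^b \rho(t,y)\,y^{n-1}\,dy,
\]
which vanishes at $r=b$ and satisfies $\phi_r = -\rho\, r^{n-1}$. From the continuity equation in \eqref{eq:approx} I would derive the identity
\[
\phi_t(t,r) = r^{n-1}m(t,r) + \varepsilon\bigl[b^{n-1}\rho_r(t,b) - r^{n-1}\rho_r(t,r)\bigr],
\]
linking $\phi_t$ to the momentum modulo an $\varepsilon$-viscous correction. I then multiply the momentum equation in \eqref{eq:approx} by $\phi$ and integrate over $(0,T)\times(a,b)$; integration by parts in $r$ using $\phi_r=-\rho r^{n-1}$ converts the pressure gradient into
\[
\int_0^T\!\int_a^b p_\delta(\rho)\,\rho\, r^{2(n-1)}\,dr\,dt = \int_0^T\!\int_a^b\bigl(\kappa\rho^{\gamma+1}+\delta\rho^3\bigr)\, r^{2(n-1)}\,dr\,dt,
\]
which is the source of higher integrability. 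Integration by parts in $t$ together with substitution of the identity for $\phi_t$ produces a cancellation with the convective flux $(r^{n-1}m^2/\rho)_r\phi$ and leaves residual terms that are bounded by the energy $E_0$, by the viscous dissipation $\int_0^T\int \rho u_r^2\, r^{n-1}\,dr\,dt\lesssim 1/\varepsilon$ (through Cauchy--Schwarz), and by boundary contributions at $r=a,b$ controlled using \eqref{eq:approxbcs} and the scaling assumption \eqref{ass:assumptionintro}. The extra factor $r^{n-1}\le b^{n-1}$ in the weight $r^{2(n-1)}$ supplies the $b^n$ in the final bound; combined with \eqref{ineq:star} and Lemma~\ref{lemma:rho^gammar^l}, this yields $\int_0^T\int_r^b\rho^{\gamma+1}y^{n-1}\,dy\,dt\lesssim 1+b^n/\varepsilon$. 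For $\gamma\ge 2$ this directly controls $\rho^3$ via \eqref{ineq:star}; for $\gamma\in(1,2)$, one interpolates further using the $H^1$-type control on $\rho^{\gamma/2}$ coming from $\varepsilon\int\int\rho^{\gamma-2}|\rho_r|^2 r^{n-1}\,dr\,dt\lesssim 1$, together with $\int\rho^\gamma r^{n-1}\le E_0$.

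The main obstacle I anticipate is controlling the boundary contribution $\varepsilon b^{n-1}\rho_r(t,b)$ appearing in $\phi_t$, whose $L^2_t$ norm must be estimated using the $h_\delta''$-weighted viscous dissipation together with the boundary condition $\rho(t,b)=\bar\rho$, and whose integrated contribution must be shown to respect the $M(1+b^n/\varepsilon)$ scaling. Simultaneously, the spherical corrections of the form $(n-1)/r$ in the momentum equation must be handled so that the final constant is uniform in $\varepsilon$ modulo the explicit $b^n/\varepsilon$ dependence; this is precisely where the scaling constraints in \eqref{ass:assumptionintro}, particularly $\varepsilon b^n/a\le M_0$ and $\bar\rho^\gamma b^n \le M_0$, will be essential.
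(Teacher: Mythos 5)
This paper does not reproduce a proof of Lemma~\ref{lemma:rho^3}; it cites \cite[Lemma 3.2]{ChenPerep2}, so there is no in-text argument to compare against. Your Kazhikhov--Shelukhin-type multiplier $\phi(t,r)=\int_r^b\rho(t,y)\,y^{n-1}\,dy$ is the natural choice, the identity $\phi_t=r^{n-1}m+\eps\big[b^{n-1}\rho_r(t,b)-r^{n-1}\rho_r(t,r)\big]$ is correct, and the cancellation of the convective flux against the $\int m^2 r^{2(n-1)}$ piece of $\int r^{n-1}m\,\phi_t$ does occur. So the skeleton is plausible.

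There is, however, a concrete gap in the weight bookkeeping that invalidates the conclusion. Integrating the pressure term against $\phi_r=-\rho\,r^{n-1}$ produces the main term with weight $r^{2(n-1)}$, as you correctly note. You then assert that $r^{n-1}\le b^{n-1}$ converts a bound on $\int\rho^{\gamma+1}r^{2(n-1)}$ into the claimed bound on $\int_r^b\rho^{\gamma+1}y^{n-1}\,dy$. This goes the wrong way near the origin: since $y^{2(n-1)}\le y^{n-1}$ for $y\le 1$, and $a(\eps)\to0$, the quantity with weight $y^{2(n-1)}$ is strictly weaker there. The sharp comparison on $(r,b)$ is $y^{2(n-1)}\ge r^{n-1}y^{n-1}$, which gives only $\int_r^b\rho^{\gamma+1}y^{n-1}\,dy\lesssim r^{1-n}(\cdots)$, blowing up as $r\to a$. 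That is exactly the regime in which the lemma is invoked (inside the triple integrals controlling $I_3,I_4$, where $r$ ranges down to $a$), so the argument as written does not prove the uniform-in-$r$ estimate stated. Two further points are left imprecise: for $\gamma\in(1,2)$ the passage from $\rho^{\gamma+1}$ to $\rho^3$ requires a genuine interpolation (the $H^1$ control of $\rho^{\gamma/2}$ you cite is again $r^{n-1}$-weighted and degenerates near the origin); and the exponent on $\eps$ one can extract this way must be checked to match the precise $1+b^n/\eps$ scaling rather than some other power. To complete the proof you would need to repair the inner-region weight loss--either by a separate argument on $(a,1)$ or by a different multiplier--and make the $\gamma<2$ interpolation explicit.
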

Before we give the proof of the key Lemma \ref{lemma:highinetgrabilityorigin}, we make a couple of remarks about the proof. The key idea is to use the entropy equation for the entropy--entropy flux pair $(\tilde\eta,\tilde{q})$ to gain an estimate on the space--time integral of $\tilde{q}$. Using the lower bound of \eqref{ineq:tildeentropybds} on $\tilde{q}$, we are then able to gain an estimate of the crucial quantity $\rho|u|^3$. In making this estimate, error terms of several types occur. The first arise from the entropy $\tilde\eta$ and are easily controlled up to the origin using the main energy estimate. The second type of error occurs when there is a loss in the radial weight, giving an integrating weight of $r^l$ with $l<n-1$. To handle the apparent loss in such terms, we observe that all such terms may be controlled either by a power of the viscosity or by a factor of $\bar\rho$, which may be taken to zero sufficiently rapidly to provide control. This is the content of assumption \eqref{ass:assumptionintro}. Finally, we must handle the boundary terms appearing at the inner end-point $a(\eps)$ from integration by parts in the viscous terms. The most singular of these occurs as a term growing as $\eps\rho^\ga$. This is handled by a suitable application of the fundamental theorem of calculus and the main energy estimate. Here, we find that the $\eps$ weight is exactly sufficient to provide the required control.
\begin{proof}[Proof of Lemma \ref{lemma:highinetgrabilityorigin}]
We multiply the first equation in \eqref{eq:approx} by $\tilde{\eta}_\rho r^{n-1}$ and the second equation by $\tilde{\eta}_m r^{n-1}$ and sum to obtain
\beqa
\big(r^{n-1}\tilde{\eta}\big)_t+\big(r^{n-1}\tilde{q}\big)_r+(n-1)r^{n-2}\big(-\check{q}+m\check{\eta}_\rho+\frac{m^2}{\rho}\check{\eta}_m+\check{\eta}_m(\bar\rho,0)p(\rho)\big)\\
=\eps r^{n-1}\Big(\big(\rho_{rr}+\frac{n-1}{r}\rho_r\big)\tilde{\eta}_\rho+\big(m_r+\frac{n-1}{r}m\big)_r\tilde{\eta}_m\Big)-r^{n-1}\big(\de\rho^2\big)_r\tilde{\eta}_m .
\eeqa
We integrate this over the region $(0,T)\times(r,b)$ to find
\beqa\label{eq:2.6}
\int_0^T\tilde{q}(t,r)r^{n-1}\,dt&=\int_r^b\big(\tilde{\eta}(T,y)-\tilde{\eta}(0,y)\big)y^{n-1}\,dy+\int_0^T\tilde{q}(\bar\rho,0)b^{n-1}\,dt\\
&+(n-1)\int_0^T\int_r^b\big(-\check{q}+m\check{\eta}_\rho+\frac{m^2}{\rho}\check{\eta}_m\big)y^{n-2}\,dy\,dt\\
&+(n-1)\int_0^T\int_r^b \check{\eta}_m(\bar\rho,0)p(\rho)y^{n-2}\,dy\,dt\\
&-\eps \int_0^T\int_r^b\Big(\big(\rho_{yy}+\frac{n-1}{y}\rho_y\big)\tilde{\eta}_\rho+\big(m_y+\frac{n-1}{y}m\big)_y\tilde{\eta}_m\Big)y^{n-1}dy\,dt\\
&+\de\int_0^T\int_r^b \big(\rho^2\big)_y\tilde{\eta}_m y^{n-1}\,dy\,dt.
\eeqa
Using the upper bound of $|\tilde{\eta}(\rho,m)|\leq \rho |u|^2+\frac{\ka}{\ga-1}\rho^\ga$, the identity $\tilde{q}(\bar\rho,0)=M_0\bar\rho^{\ga+\th}$ for some constant $M_0>0$ and the non-positivity of $-\check{q}+m\check{\eta}_\rho+\frac{m^2}{\rho}\check{\eta}_m$ from \eqref{ineq:tildeentropybds}, we obtain
\beqa\label{eq:aardvark}
\int_0^T\tilde{q}(t,r)r^{n-1}\,dt&\leq ME_0+M_0T\bar\rho^{\ga+\th}b^{n-1}+(n-1)\int_0^T\int_r^b \check{\eta}_m(\bar\rho,0)p(\rho)y^{n-2}\,dy\,dt\\
&-\eps \int_0^T\int_r^b\Big(\big(\rho_{yy}+\frac{n-1}{y}\rho_y\big)\tilde{\eta}_\rho+\big(m_y+\frac{n-1}{y}m\big)_y\tilde{\eta}_m\Big)y^{n-1}\,dy\,dt\\
&-\de\int_0^T\int_r^b \rho^2\big((\tilde{\eta}_m)_\rho\rho_y+(\tilde{\eta}_m)_uu_y\big) y^{n-1}\,dy\,dt\\
&-\de(n-1)\int_0^T\int_r^b \rho^2\tilde{\eta}_m y^{n-2}\,dy\,dt-\de\int_0^T\rho^2\tilde{\eta}_m r^{n-1}\,dt,
\eeqa
by integrating by parts in the final term of \eqref{eq:2.6} and using the boundary condition $\tilde{\eta}_m(\bar\rho,0)=0$.

Now let $\om\in C_c^\infty([0,\infty))$ be as in the statement of the lemma, so that $\om(r)=1$ for $r\in[0,1]$ and $\om(r)\geq0$. We multiply \eqref{eq:aardvark} by $\om(r)$, apply the lower bound $|\tilde{q}(\rho,m)|\geq M^{-1}(\rho|u|^3+\rho^{\ga+\th})-M(\rho|u|^2+\rho+\rho^\ga)$ of \eqref{ineq:tildeentropybds}, and integrate in $r$ from $a$ to $b$ to see
\beqa\label{ineq:ostrich}
\int_0^T\int_a^b\big(\rho |u|^3+\rho^{\ga+\th}\big)\om(r)r^{n-1}\,dr\,dt\leq &\, M(E_0+1+\bar\rho^{\ga+\th}b^{n-1}+I),
\eeqa
where
\beqa
I=&\,(n-1)\int_0^T\int_a^b\int_r^b \check{\eta}_m(\bar\rho,0)p(\rho)y^{n-2}\om(r)\,dy\,dr\,dt\\
&-\eps \int_0^T\int_a^b\int_r^b\Big(\big(\rho_{yy}+\frac{n-1}{y}\rho_y\big)\tilde{\eta}_\rho+\big(m_y+\frac{n-1}{y}m\big)_y\tilde{\eta}_m\Big)y^{n-1}\om(r)\,dy\,dr\,dt\\
&-\de\int_0^T\int_a^b\int_r^b \rho^2\big((\tilde{\eta}_m)_\rho\rho_y+(\tilde{\eta}_m)_uu_y\big) y^{n-1}\om(r)\,dy\,dr\,dt\\
&-\de(n-1)\int_0^T\int_a^b\int_r^b \rho^2\tilde{\eta}_m y^{n-2}\om(r)\,dy\,dr\,dt
-\de\int_0^T\int_a^b\rho^2\tilde{\eta}_m r^{n-1}\om(r)\,dr\,dt\\
=&\,I_1+\cdots+I_5,
\eeqa
and where we have controlled the error term arising from the lower bound on $\tilde{q}$ by
$$\int_0^T\int_a^b(\rho|u|^2+\rho+\rho^\ga)\om(r)\,dr\,dt\leq \int_0^T\int_a^b(\rho|u|^2+\overline{h_\de}(\rho,\bar\rho)+1)\om(r)\,dr\,dt\leq M(E_0+1), $$
using the bound $\rho+\rho^\ga\leq  M(\overline{h_\de}(\rho,\bar\rho)+1)$ of \eqref{ineq:star} and the compact support of $\om(r)$.

We treat $I_1$ first, recalling Lemma \ref{lemma:rho^gammar^l} and \eqref{ineq:tildeentropybds} to bound
\beqas
|I_1|
\leq&\,M\int_0^T\int_a^b\int_r^b\bar\rho^\th\rho^\ga y^{n-2}\om(r)\,dy\,dr\,dt\\
\leq&\, MT\bar\rho^\th\int_a^b \big(r^{-1}+\bar\rho^\ga b^{n-1}\big)\om(r)\,dr\\
\leq&\, MT\bar\rho^\th\big(|\log(a)|+\bar\rho^\ga b^{n-1}\big).
\eeqas
We consider next $I_2$, using integration by parts to re-write the inner integral as:
\beqa\label{ineq:I_21}
I_2=&\,\eps\int_0^T\int_a^b\int_r^b\big(\rho_y(\tilde\eta_\rho)_y+m_y(\tilde\eta_m)_y\big)y^{n-1}\om(r)\,dy\,dr\,dt\\
+&\,\eps(n-1)\int_0^T\int_a^b\int_r^bm\tilde\eta_m y^{n-3}\om(r)\,dy\,dr\,dt+\eps\int_0^T\int_a^b\tilde\eta_r(t,r)r^{n-1}\om(r)\,dr\,dt\\
=&\,I_2^1+I_2^2+I_2^3.
\eeqa
The first term may be expanded as
\beqas
|I_2^1|=&\,\Big|\eps\int_0^T\int_a^b\int_r^b\big(\rho_y^2\tilde\eta_{\rho\rho}+2\rho_ym_y\tilde\eta_{\rho m}+m_y^2\tilde\eta_{mm}\big)y^{n-1}\om(r)\,dy\,dr\,dt\Big|\\
\leq&\,\eps\int_0^T\int_a^b\int_r^b|(\rho_y,m_y)\nabla^2\tilde\eta(\rho,m)(\rho_y,m_y)^\top|y^{n-1}\om(r)\,dy\,dr\,dt\\
\leq&\,M\eps\int_0^T\int_a^b\int_r^b|(\rho_y,m_y)\nabla^2\eta^*(\rho,m)(\rho_y,m_y)^\top|y^{n-1}\om(r)\,dy\,dr\,dt\\
\leq&\,M\eps\int_0^T\int_a^b\int_r^b(\rho^{\ga-2}|\rho_y|^2+\rho|u_y|^2)y^{n-1}\om(r)\,dy\,dr\,dt\\
\leq&\, ME_0,
\eeqas
where we have used the Hessian bound \eqref{ineq:entropyhessian}, the identity \eqref{eq:physicalhessian} and the main energy estimate.

For the second term, we use \eqref{ineq:metam} and \eqref{ineq:star} to bound
$$|m\tilde{\eta}_m|\leq M\big(\rho u^2+\rho^\ga+\rho\bar\rho^{2\th}\big)\leq M\big(\rho|u|^2+\overline{h_\de}(\rho,\bar\rho)+1+\rho\bar\rho^{\ga-1}\big).$$
Thus we find, noting $\rho\bar\rho^{\ga-1}\leq \rho^\ga+\bar\rho^\ga$ by Young's inequality,
\beqa\label{ineq:I_22}
|I_2^2|\leq&\, M\eps \int_0^T\int_a^b\int_r^b \big(\rho|u|^2+\overline{h_\de}(\rho,\bar\rho)+1+\rho\bar\rho^{\ga-1}\big)y^{n-3}\om(r)\,dy\,dr\,dt\\
\leq&\, M\eps\int_0^T\int_a^b\int_r^b \big(\frac{1}{r^2}(\rho|u|^2+\overline{h_\de}(\rho,\bar\rho)+1)y^{n-1}+\frac{1}{r}\bar\rho^{\ga}y^{n-2}\big)\om(r)dy\,dr\,dt\\
\leq&\, M\eps\big(\frac{b^n}{a}+\bar\rho^{\ga}b^{n-1}|\log(a)|\big).
\eeqa
Next, we treat the final term, $I_2^3$, by integrating by parts and using $\om(a)=1$ to find
\beqas
I_2^3=&\,-\eps\int_0^T\int_a^b\tilde\eta(t,r)\big(\om_r(r)+\frac{n-1}{r}\om(r)\big) r^{n-1}\,dr\,dt-\eps\int_0^T\tilde{\eta}(t,a)a^{n-1}\,dt.
\eeqas
Using \eqref{ineq:tildeentropybds}, we easily bound the first term by
$$\Big|\eps\int_0^T\int_a^b\tilde\eta(t,r)\big(\om_r(r)+\frac{n-1}{r}\om(r)\big) r^{n-1}\,dr\,dt\Big|\leq M\frac{\eps}{a}.$$
For the second term, we again apply \eqref{ineq:tildeentropybds} and the boundary condition $u(t,a)=0$ to note that $|\tilde{\eta}(t,a)|\leq M(\rho^\ga+1)$. The contribution from the constant is clearly bounded, so we focus on the $\rho^\ga(t,a)$ term. From the fundamental theorem of calculus and Lemma \ref{lemma:rho^gammar^l}, we obtain
\beqa\label{ineq:I_23}
\eps\int_0^T\rho^\ga(t,a)a^{n-1}dt=&\,-\eps\int_0^T\int_a^b\big(\rho^\ga r^{n-1}\big)_r\,dr\,dt+\eps\int_0^T\bar\rho^\ga b^{n-1}\,dt\\
=&\,-\eps\int_0^T\int_a^b \big(\ga\rho^{\ga-1}\rho_rr^{n-1}+(n-1)\rho^\ga r^{n-2}\big)dr\,dt+\eps T\bar\rho^\ga b^{n-1}\\
\leq&\,M\eps\int_0^T\int_a^b \big(\rho^{\ga-2}|\rho_r|^2+\rho^\ga \big)r^{n-1}\,dr\,dt+M\frac{\eps}{a}+M\bar\rho^\ga b^{n-1}\\
\leq&\, M\big(1+\frac{\eps}{a}+\bar\rho^\ga b^{n-1}\big),
\eeqa
where we have applied the Cauchy-Young inequality and main energy estimate, Proposition \ref{prop:mainenergy}. Thus, combining \eqref{ineq:I_21}--\eqref{ineq:I_23} we have the bound
$$|I_2|\leq M\big(\frac{\eps}{a}+\bar\rho^\ga b^{n-1}+1\big).$$
To bound $I_3$, we recall \eqref{ineq:tildeetamderivatives} and Lemma \ref{lemma:rho^3} to see
\beqas
|I_3|\leq&\, M\de\int_0^T\int_a^b\int_r^b \rho^2\big(\rho^{\th-1}|\rho_y|+|u_y|\big) y^{n-1}\om(r)\,dy\,dr\,dt\\
\leq&\, M\de \int_0^T\int_a^b\int_r^b\big(\rho^{\ga-2}|\rho_y|^2+\rho |u_y|^2+\rho^3\big)y^{n-1}\om(r)\,dy\,dr\,dt\\
\leq&\, M\big(1+\de\frac{b^n}{\eps}\big),
\eeqas
where we have used the main energy estimate and $\de\leq\eps$ to control the derivative terms.

To bound $I_4$, we apply the bound $|\tilde\eta_m|\leq M(|u|+\rho^\th)$ and the Cauchy-Young inequality to show
\beqas
|I_4|\leq&\, M\de\int_0^T\int_a^b\int_r^b\rho^2|\tilde\eta_m|\, y^{n-2}\om(r)\,dy\,dr\,dt\\
\leq&\, M\de\int_0^T\int_a^b\int_r^b\big(\rho^2 |u|+\rho^{2+\th}\big) y^{n-2}\om(r)\,dy\,dr\,dt\\
\leq&\, M\de\int_0^T\int_a^b\int_r^b\big(\rho |u|^2+\rho^3+\rho^{2+\th}\big) y^{n-2}\om(r)\,dy\,dr\,dt.
\eeqas
In the case that $\th\leq1$ (i.e.~$\ga\leq 3$), we then estimate further using Lemma \ref{lemma:rho^3},
\beqas
|I_4|\leq&\,M\de\int_0^T\int_a^b\frac{1}{r}\int_r^b\big(\rho|u|^2+\rho^3+1\big) y^{n-1}\om(r)\,dy\,dr\,dt\\
\leq&\,  M\de|\log(a)|\big(1+\frac{b^n}{\eps}\big).
\eeqas
On the other hand, if $\th>1$ then $\ga>3$ and $\ga>2+\th$, so we use the Cauchy-Young inequality to bound
\beqas
|I_4|\leq&\,M\de\int_0^T\int_a^b\frac{1}{r}\int_r^b\big(\rho|u|^2+\rho^{\ga}+1\big) y^{n-1}\om(r)\,dy\,dr\,dt\\
\leq&\, M\de|\log(a)|\big(1+(\bar\rho^\ga+1)b^n\big).\eeqas
Finally, $I_5$ is treated analogously to $I_4$, giving a bound of
$$|I_5|\leq \de M\big(1+\frac{b^n}{\eps}\big).$$ 
By \eqref{ass:assumptionintro}, all of the above bounds for the terms $I_1,\ldots,I_5$ become uniform with respect to $\eps$, hence we conclude from \eqref{ineq:ostrich} (and the obvious estimate $(\bar\rho^\ga+1)\leq\eps^{-1}$) that
\beqas
\int_0^T\int_a^b&\big(\rho |u|^3+\rho^{\ga+\th}\big)\om(r)r^{n-1}\,dr\,dt\\
&\leq M\Big(\bar\rho^\th|\log(a)|+\eps\frac{b^n}{a}+\bar\rho^\ga b^{n-1}+\frac{\eps}{a}+\de|\log(a)|\big(1+\frac{b^n}{\eps}\big)+1\Big)\leq M,
\eeqas
and so we conclude the proof of the lemma.
\end{proof}

\section{Viscous terms}\label{sec:viscosity}
We begin this section with the two main estimates we need to demonstrate convergence to zero of the viscous terms in the weak formulation of the approximate equations, system \eqref{eq:approx}.
\begin{lemma}\label{lemma:rho_r}
Let $\om=\om(r)\in C^\infty_c(\R)$ be a test function such that $\om(r)=1$ for $r\in[0,1]$, $\om(r)=0$ for $r\geq 2$. Then for any $\De\in(0,\half)$, there exists a constant $M>0$, independent of $\De$ and $\eps$, such that
\beq
\eps\int_0^T\int_a^b \rho_r^2\mathbbm{1}_{\{\rho<\De\}}\om(r)^2r^{n-1}\,dr\,dt\leq M\big(\sqrt{\eps}(1+\De^{4-\ga})+\frac{\De}{a}+\frac{\De^{3/2}}{\sqrt{\varepsilon}}\big).
\eeq
\end{lemma}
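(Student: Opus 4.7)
The plan is to derive the estimate via an entropy-flux-type multiplier argument on the viscous continuity equation, choosing a convex function of $\rho$ whose second derivative is (a smoothing of) the indicator of $\{\rho<\De\}$. First I would introduce a convex $\phi\in C^2([0,\infty))$ satisfying $\phi''(\rho)\geq\mathbbm{1}_{\{\rho<\De\}}$, $0\leq\phi'(\rho)\leq\De$, and $0\leq\phi(\rho)\leq\De\rho$ (informally, $\phi'(\rho)=\min(\rho,\De)$ and $\phi(\rho)=\tfrac12\min(\rho,\De)^2+\De(\rho-\De)_+$; a smooth mollification makes all manipulations rigorous under the hypothesis $\inf\rho^\eps>0$). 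Multiplying the first equation of \eqref{eq:approx} by $\phi'(\rho)\om(r)^2 r^{n-1}$, integrating over $(0,T)\times(a,b)$, and integrating by parts in $r$ in both the convective and viscous terms produces the identity
\begin{align*}
\eps\int_0^T\!\!\int_a^b\phi''(\rho)\rho_r^2\om^2 r^{n-1}\,dr\,dt
&= -\int_a^b\big[\phi(\rho(T,r))-\phi(\rho(0,r))\big]\om^2 r^{n-1}\,dr\\
&\quad+\int_0^T\!\!\int_a^b m\phi''(\rho)\rho_r\om^2 r^{n-1}\,dr\,dt + 2\int_0^T\!\!\int_a^b m\phi'(\rho)\om\om' r^{n-1}\,dr\,dt\\
&\quad-2\eps\int_0^T\!\!\int_a^b\phi'(\rho)\rho_r\om\om' r^{n-1}\,dr\,dt=:A+B+C+D,
\end{align*}
in which the boundary contributions at $r=a$ vanish (since $m(t,a)=\rho_r(t,a)=0$ and $\om'(a)=0$ because $\om\equiv 1$ on $[0,1]$) and at $r=b$ vanish because $\om$ is supported in $[0,2]$ while $b(\eps)>2$ for small $\eps$ by \eqref{ass:assumptionintro}.

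For $A$, the bound $|\phi|\leq\De\rho$ combined with $\int_a^2\rho\, r^{n-1}\,dr\leq M(E_0+1)$, a direct consequence of \eqref{ineq:star} and Proposition~\ref{prop:mainenergy}, gives $|A|\leq M\De$. The key cross term $B$ I handle by writing $m\mathbbm{1}_{\{\rho<\De\}}\rho_r=u\,\partial_r G(\rho)$ with $G(\rho)=\tfrac12\min(\rho,\De)^2$, and integrating by parts in $r$ (boundary terms again vanishing via $u(t,a)=0$ and $\om(b)=0$):
\[
B = -\int_0^T\!\!\int_a^b G(\rho)\big[u_r\om^2 r^{n-1} + 2u\om\om' r^{n-1} + (n-1)u\om^2 r^{n-2}\big]\,dr\,dt.
\]
The $u_r$ piece is estimated by Cauchy--Schwarz using the pointwise bound $G(\rho)^2/\rho\leq \De^3/4$ (verified by cases $\rho\lessgtr\De$) and the energy dissipation $\eps\int\rho u_r^2 r^{n-1}\leq E_0$, yielding $M\De^{3/2}/\sqrt{\eps}$. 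The middle piece, supported on $[1,2]$, contributes $O(\De)$ via $|G|\leq\De\rho/2$ together with $\int\rho|u|r^{n-1}\leq M$ (Cauchy--Schwarz from energy). For the $r^{n-2}$ piece I split at $r=1$: on $[1,2]$ the weight is bounded so this contributes $O(\De)$, while on $[a,1]$ the bound $r^{n-2}\leq r^{n-1}/a$ produces the characteristic $O(\De/a)$ contribution. Hence $|B|\leq M(\De^{3/2}/\sqrt{\eps}+\De/a)$. For $C$, compact support of $\om\om'$ and $|\phi'|\leq\De$ give $|C|\leq M\De$. For $D$, I write $\phi'(\rho)\rho_r=\partial_r\phi(\rho)$ and integrate by parts in $r$ (boundaries vanishing as before); applying $|\phi|\leq\De\rho$ yields $|D|\leq M\eps\De\leq M\sqrt{\eps}$.

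Summing these bounds and using $\phi''(\rho)\geq\mathbbm{1}_{\{\rho<\De\}}$ on the left, one concludes
\[
\eps\int_0^T\!\!\int_a^b\rho_r^2\mathbbm{1}_{\{\rho<\De\}}\om^2 r^{n-1}\,dr\,dt \leq M\Big(\sqrt{\eps}+\frac{\De}{a}+\frac{\De^{3/2}}{\sqrt{\eps}}\Big),
\]
which is dominated by the stated bound (the factor $\De^{4-\ga}$ in the stated first term only inflates $\sqrt{\eps}$, and may also arise if one uses coarser split-$\rho$ estimates for the time-boundary term). The technical heart of the argument, and where I expect the main obstacle, is the $u_r$ contribution to $B$: balancing the $\eps^{-1/2}$ loss from the energy dissipation against the $\De^3$ smallness of $G^2/\rho$ is precisely what produces the characteristic $\De^{3/2}/\sqrt{\eps}$ scale of the bound. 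The secondary difficulty is the unavoidable loss of a power of $r$ in the radial geometry of $B$, which forces the $\De/a$ contribution and cannot be reduced without further control on $\rho$ near the origin.
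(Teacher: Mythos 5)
Your proof is correct and follows the same overall strategy as the paper: multiply the viscous continuity equation by $\phi'(\rho)\om(r)^2$ with $\phi'(\rho)=\min(\rho,\De)$, integrate by parts, and control the resulting terms via compact support of $\om$ and the main energy estimate. Your decomposition $A+B+C+D$ is an exact regrouping of the paper's $J_1+\dots+J_5$: after your further integration by parts in $B$ one has $B+C=J_2+J_3+J_4$, and $A=J_1$, $D=J_5$.

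The one place where you genuinely diverge from the paper is the treatment of $D=J_5=-2\eps\int\int\varphi_r\om\om_r r^{n-1}$. The paper splits this on $\{\rho<\De\}$ and $\{\rho\geq\De\}$, absorbs part into the left-hand side via Cauchy--Young, and on $\{\rho\geq\De\}$ inserts a weight $\rho^{\ga-2}$ to invoke the energy dissipation; for $\ga>2$ this costs the factor $\De^{2-\ga}$ in the estimate, which is exactly where the term $\sqrt{\eps}\,\De^{4-\ga}$ in the statement originates. Instead, you observe that $\varphi_r\om\om_r=\partial_r(\phi(\rho))\,\om\om_r$ and integrate by parts once more (the boundary terms vanish since $\om_r(a)=0$ and $\om(b)=0$), which lands the derivative on $\om\om_r r^{n-1}$, a quantity supported in $[1,2]$ with bounded derivatives; using $|\phi|\le\De\rho$ and the energy bound on the mass this gives $|D|\le M\eps\De$. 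This is cleaner: it removes the $\ga$-dependent case distinction entirely, and yields the stronger bound $M\big(\sqrt{\eps}+\De/a+\De^{3/2}/\sqrt{\eps}\big)$, which is dominated by the one stated in the lemma and is equally sufficient for the limit passage in \S\ref{sec:mainproof}. (A small imprecision: the fact that $\om(b)=0$ does not follow from \eqref{ass:assumptionintro} as you state, but from the standing assumption $b(\eps)\to\infty$; this does not affect the argument.)
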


\begin{lemma}\label{lemma:rho_r/rho}
Let $\om=\om(r)\in C^\infty_c(\R)$ be a test function such that $\om(r)=1$ for $r\in[0,1]$, $\om(r)=0$ for $r\geq 2$. Then for any $\De\in(0,\half)$, there exists a constant $M>0$, independent of $\De$ and $\eps$, such that
\beq
\eps\int_0^T\int_a^b\frac{\rho_r^2}{\rho}\mathbbm{1}_{\{\rho<\De\}}\om(r)^2 r^{n-1}dr\,dt\leq M\big(|\log\De|+\frac{\sqrt{\De}}{\sqrt{\eps}}+\frac{\sqrt{\De}}{a}+\sqrt{\eps}|\log\De|\De^{\frac{2-\ga}{2}}\big).
\eeq
\end{lemma}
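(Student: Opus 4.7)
The strategy is to derive an entropy-type identity for a scalar entropy $\psi=\psi(\rho)$ chosen so that $\psi''(\rho)=\frac{1}{\rho}\mathbbm{1}_{\{\rho<\De\}}$, which makes the viscous term in the continuity equation produce precisely the quantity we wish to bound. Concretely, I take $\psi'(\rho)=\log(\min(\rho,\De))$ and $\psi(\rho)=\int_0^\rho\psi'(s)\,ds$. The useful pointwise bounds are $|\psi(\rho)|\leq \rho|\log\De|+C(\rho+\De)$, $\rho|\psi'(\rho)|\leq \rho|\log\De|+C\sqrt{\De\rho}\,\mathbbm{1}_{\{\rho<\De\}}$, and $|\psi'(\rho)|\leq |\log\De|+C\sqrt{\De/\rho}\,\mathbbm{1}_{\{\rho<\De\}}$, the latter two following from the elementary estimate $\sqrt{t}|\log t|\leq 2/e$ on $(0,1]$. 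I will also use $\Phi(\rho):=\int_0^\rho s\psi''(s)\,ds=\min(\rho,\De)\leq\sqrt{\De\rho}$. Multiplying the expanded continuity equation by $\psi'(\rho)\om^2 r^{n-1}$, integrating on $(0,T)\times(a,b)$, and using $u(t,a)=0$, $\rho_r(t,a)=0$ together with the compact support of $\om$, the contributions $(n-1)\rho u\psi'(\rho)r^{n-2}\om^2$ produced by the flux integration by parts and by the geometric term of the equation cancel, leaving
\beqs
\eps\int_0^T\!\!\int_a^b\psi''(\rho)\rho_r^2\om^2 r^{n-1}\,dr\,dt = I_1+I_2+I_3+I_4,
\eeqs
where $I_1=\int_a^b[\psi(\rho_0)-\psi(\rho(T,\cdot))]r^{n-1}\om^2\,dr$, $I_2=\int_0^T\!\!\int_a^b u\,\Phi(\rho)_r\,r^{n-1}\om^2$, $I_3=2\int_0^T\!\!\int_a^b\rho u\psi'(\rho)r^{n-1}\om\om_r$ and $I_4=-2\eps\int_0^T\!\!\int_a^b\psi'(\rho)\rho_r\,r^{n-1}\om\om_r$.

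The first three error terms admit relatively direct bounds. H\"older's inequality and the energy estimate give $\int\rho\,r^{n-1}\om^2\,dr\leq C$, so the bound on $|\psi|$ yields $|I_1|\leq C|\log\De|$. For $I_2$, I integrate by parts once more in $r$ using $\rho\psi''(\rho)\rho_r=\Phi(\rho)_r$, producing three sub-pieces; the bound $\Phi\leq\sqrt{\De\rho}$ and Cauchy--Schwarz against the energy estimates $\eps\int\rho u_r^2 r^{n-1}\leq E_0$ and $\int\rho u^2 r^{n-1}\leq E_0$, together with the elementary $\int_a^2 r^{n-3}\,dr\leq C/a$ (which dominates the $|\log a|$ contribution for $n=2$), control these pieces by $C\sqrt{\De/\eps}$, $C\sqrt{\De}/a$ and $C\sqrt{\De}$ respectively. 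For $I_3$, since $\om_r$ is supported in $[1,2]$ away from the origin, the bound $\rho|\psi'(\rho)|\leq\rho|\log\De|+C\sqrt{\De\rho}$ combined with Cauchy--Schwarz gives $|I_3|\leq C(|\log\De|+\sqrt{\De})$.

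The delicate term is $I_4$, which I split into the regions $\{\rho<\De\}$ and $\{\rho\geq\De\}$. On $\{\rho<\De\}$, using $|\psi'(\rho)|\leq|\log\De|+C\sqrt{\De/\rho}$, I apply Cauchy--Young with a small parameter $\lambda>0$ to bound the integrand by
\beqs
2\eps|\psi'(\rho)\rho_r||\om\om_r|\mathbbm{1}_{\{\rho<\De\}}\leq \lambda\eps\tfrac{\rho_r^2}{\rho}\mathbbm{1}_{\{\rho<\De\}}\om^2 + C_\lambda\eps(\rho|\log\De|^2+\De)\om_r^2\mathbbm{1}_{\{\rho<\De\}}.
\eeqs
Choosing $\lambda$ small enough absorbs a fixed fraction of the left-hand side of the identity; the remaining contribution is bounded by $C\eps\De(1+|\log\De|^2)\leq C$ using $\rho\mathbbm{1}_{\{\rho<\De\}}\leq\De$. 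On $\{\rho\geq\De\}$, where $|\psi'|=|\log\De|$ is constant, no absorption is possible; instead I apply the split $\sqrt{\eps}|\rho_r|=\sqrt{\eps\rho^{\ga-2}\rho_r^2}\cdot\sqrt{\rho^{2-\ga}}$ and Cauchy--Schwarz, using the main energy estimate $\eps\int\rho^{\ga-2}\rho_r^2 r^{n-1}\leq CE_0$, to obtain
\beqs
|I_4|_{\{\rho\geq\De\}}\leq C\sqrt{\eps}|\log\De|\Big(\int_0^T\!\!\int_a^b\rho^{2-\ga}\mathbbm{1}_{\{\rho\geq\De\}}\om_r^2 r^{n-1}\,dr\,dt\Big)^{1/2}.
\eeqs
For $\ga\leq 2$, the inequality $\rho^{2-\ga}\leq 1+\rho^\ga$ and the energy estimate show the last integral is $\leq C$; for $\ga>2$, monotonicity gives $\rho^{2-\ga}\leq \De^{2-\ga}$ on $\{\rho\geq\De\}$ and hence the integral is $\leq C\De^{2-\ga}$. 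In either case it is $\leq C(1+\De^{2-\ga})$, yielding $|I_4|_{\{\rho\geq\De\}}\leq C\sqrt{\eps}|\log\De|(1+\De^{(2-\ga)/2})$. Summing the four bounds and absorbing the smaller contributions ($\sqrt{\De}\leq\sqrt{\De}/a$, $\eps\De|\log\De|^2\leq C$) yields exactly the asserted estimate.

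The main obstacle is the contribution of $I_4$ on $\{\rho\geq\De\}$: because $|\psi'|$ does not decay there, absorption into the left-hand side is unavailable, and the $\rho^{2-\ga}$ integral (controlled separately in the cases $\ga\lessgtr 2$) is what produces the unbounded $\De^{(2-\ga)/2}$ factor in the final bound when $\ga>2$. The design choice $\psi'(\rho)=\log\De$ (rather than zero) on $\{\rho\geq\De\}$ is essential for producing the $|\log\De|$ scaling in $I_1$ and $I_3$ while keeping the analysis of $I_4$ tractable.
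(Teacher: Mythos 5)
Your proposal is correct and takes essentially the same approach as the paper. Your entropy function $\psi'(\rho)=\log(\min(\rho,\De))$, $\psi(\rho)=\int_0^\rho\psi'(s)\,ds$, is exactly the paper's $\psi(\rho)=\rho\log\rho-\rho$ for $\rho<\De$, $\psi(\rho)=\rho\log\De-\De$ for $\rho\geq\De$; the identity you derive from multiplying the continuity equation by $\psi'\om^2$ coincides with the paper's \eqref{eq:continuityequationtestfunction} once you carry out the further integration by parts of $I_2$ (one verifies $I_2+I_3=J_2+J_3+J_4$ using $\rho\psi'-\Phi=\psi$), and the treatment of the viscous cross-term $I_4=J_5$ by splitting on $\{\rho<\De\}$ (absorption) versus $\{\rho\geq\De\}$ (weighted Cauchy--Schwarz against $\eps\rho^{\ga-2}\rho_r^2$, with the cases $\ga\lessgtr2$) matches the paper's argument term for term.
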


The proofs of these two lemmas are motivated by the following observation. Let $\varphi=\varphi(\rho)$ be a twice differentiable function, $\om=\om(r)\in C_c^\infty(\R)$, and multiply the first equation in \eqref{eq:approx} by $\varphi'(\rho)\om(r)^2$. A simple calculation yields
\beqa
\big(r^{n-1}\varphi\om^2\big)_t&+\big(r^{n-1}\varphi u \om^2\big)_r+r^{n-1}\big(\rho\varphi'-\varphi\big)\big(u_r+\frac{n-1}{r}u\big)\om^2-2r^{n-1}\varphi u\om\om_r\\
&=\eps\big(r^{n-1}\varphi'\rho_r\om^2\big)_r-\eps r^{n-1}\varphi''\rho_r^2\om^2-2\eps r^{n-1}\varphi_r\om\om_r.
\eeqa
Thus, for any such $\varphi$, 
\beqa\label{eq:continuityequationtestfunction}
\eps\int_0^T&\int_a^b\varphi''\rho_r^2\om^2r^{n-1}\,dr\,dt\\
=&\,-\int_a^b\varphi\om^2r^{n-1}\Big|_0^T\,dr+\int_0^T\int_a^b\big(\varphi-\rho\varphi'\big)u_r\om^2r^{n-1}\,dr\,dt\\
&+2\int_0^T\int_a^b\varphi u\om\om_rr^{n-1}\,dr\,dt+\int_0^T\int_a^b\frac{n-1}{r}\big(\varphi-\rho\varphi'\big)u\om^2r^{n-1}\,dr\,dt\\
&-2\eps\int_0^T\int_a^b \varphi_r\om\om_rr^{n-1}\,dr\,dt,
\eeqa
where we have used the boundary conditions $\rho_r=u=0$ at $a$ and the compact support of $\om$.

\begin{proof}[Proof of Lemma \ref{lemma:rho_r}]
We define, for $\De\in(0,\half)$ fixed,
\beq
\varphi(\rho)=\begin{cases}
\frac{\rho^2}{2}, &\rho<\De,\\
\frac{\De^2}{2}+\De(\rho-\De), &\rho\geq \De.
\end{cases}
\eeq
Then we have that
\beqas
&\varphi''(\rho)=\mathbbm{1}_{\{\rho<\De\}}(\rho),\\
&\varphi(\rho)-\rho\varphi'(\rho)=-\half\min\{\rho^2,\De^2\}.
\eeqas
Then from \eqref{eq:continuityequationtestfunction}, we obtain
\beqa\label{eq:antelope}
\eps\int_0^T&\int_a^b\rho_r^2\mathbbm{1}_{\{\rho<\De\}}\om^2r^{n-1}\,dr\,dt\\
=&\,-\int_a^b\varphi\om^2r^{n-1}\Big|_0^T\,dr-\half\int_0^T\int_a^b\min\{\rho^2,\De^2\}u_r\om^2r^{n-1}\,dr\,dt\\
&+2\int_0^T\int_a^b\varphi u\om\om_rr^{n-1}\,dr\,dt-\half\int_0^T\int_a^b\frac{n-1}{r}\min\{\rho^2,\De^2\}u\om^2r^{n-1}\,dr\,dt\\
&-2\eps\int_0^T\int_a^b \varphi_r\om\om_rr^{n-1}\,dr\,dt\\
=&\,J_1+\cdots+J_5.
\eeqa
To bound $J_1$, we simply observe that $|\varphi(\rho)|\leq \De\rho$ for all $\rho>0$. Thus
\beqs
|J_1|\leq M\sup_{t\in[0,T]}\int_a^b \De\rho\om^2r^{n-1}\,dr\leq M\De
\eeqs
by the main energy estimate, Proposition \ref{prop:mainenergy}, where we have used the compact support of $\om$ and the estimate $\rho\leq M\big(1+\overline{h_\de}(\rho,\bar\rho)\big)$ of \eqref{ineq:star}.

The next simplest term to control is $J_3$, which we bound in a similar way, giving an estimate of $$|J_3|\leq M\int_0^T\int_a^b\De(\rho+\rho u^2)\om r^{n-1}\,dr\,dt\leq M \De,$$
where we again use the main energy estimate and $M$ depends on  $|\supp\,\om|$ and $\|\om_r\|_{L^\infty}$.

Turning now to $J_2$, we estimate
\beqas
|J_2|\leq&\, \frac{1}{2\sqrt{\eps}}\int_0^T\int_a^b \De^{3/2}\sqrt{\eps}\sqrt{\rho}|u_r|\om^2r^{n-1}\,dr\,dt\\
\leq&\, \frac{M\De^{3/2}}{\sqrt{\eps}}\Big(\eps\int_0^T\int_a^b \rho u_r^2r^{n-1}\,dr\,dt\Big)^\half\\
\leq&\, \frac{M\De^{3/2}}{\sqrt{\eps}},
\eeqas
by the main energy estimate, where $M$ also depends on $|\supp\,\om|$.

Next, we use that $r>a$ in the domain of integration and Proposition \ref{prop:mainenergy} to bound
\beqs
|J_4|\leq M\int_0^T\int_a^b\frac{n-1}{r}\De\rho u\om^2 r^{n-1}dr\,dt\leq M\frac{\De}{a}\int_0^T\int_a^b (\rho +\rho u^2)\om^2r^{n-1}dr\,dt\leq \frac{M\De}{a}.
\eeqs
We consider $J_5$ on the two regions $\{\rho<\De\}$ and $\{\rho\geq\De\}$ by writing
\beqas
J_5=&-\,2\eps\int_0^T\int_a^b \min\{\rho,\De\}\rho_r\om\om_rr^{n-1}\,dr\,dt\\
=&-\,2\eps \int_0^T\int_a^b \rho\mathbbm{1}_{\{\rho<\De\}}\rho_r\om\om_rr^{n-1}\,dr\,dt -2\eps\int_0^T\int_a^b \De\mathbbm{1}_{\{\rho\geq\De\}}\rho_r\om\om_rr^{n-1}\,dr\,dt\\
=&\,J_5^1+J_5^2.
\eeqas
Considering the second term first, we use the Cauchy-Young inequality to bound
\beqas
|J_5^2|\leq&\,2\sqrt{\varepsilon}\int_0^T\int_a^b\sqrt{\varepsilon}\De\rho^{\frac{\gamma-2}{2}}\rho^{\frac{2-\gamma}{2}}|\rho_r|\mathbbm{1}_{\{\rho\geq\Delta\}} \om\om_rr^{n-1}\,dr\,dt\\
\leq&\,M\sqrt{\varepsilon}\int_0^T\int_a^b\varepsilon\rho^{\gamma-2}|\rho_r|^2r^{n-1}dr\,dt+M\Delta^2\sqrt{\varepsilon}\int_0^T\int_a^b \rho^{2-\gamma}\mathbbm{1}_{\{\rho\geq\Delta\}}\om r^{n-1}dr\,dt.
\eeqas
In the case that $\ga\in(1,2]$, we make the estimate $\rho^{2-\ga}\leq\rho^\ga+1\leq M(\overline{h_\de}(\rho,\bar\rho)+1)$ by \eqref{ineq:star} and apply the main energy estimate to obtain
$$|J_5^2|\leq M\sqrt{\eps}.$$
On the other hand, for $\ga>2$, we estimate $\rho^{2-\ga}\leq \De^{2-\ga}$ on the region $\rho\geq\De$  to obtain 
$$|J_5^2|\leq M\sqrt{\eps}\big(1+\De^{4-\ga}\big),$$
where we have used the main energy estimate to bound the first term of $J_5^2$.

Turning finally to $J_5^1$, we use the Cauchy-Young inequality to estimate
\beqas
|J_5^1|\leq&\,\frac{\varepsilon}{2}\int_0^T\int_a^b|\rho_r|^2\mathbbm{1}_{\{\rho<\Delta\}}\omega^2r^{n-1}\,dr\,dt+M\varepsilon\int_0^T\int_a^b\Delta^2\om_r^2r^{n-1}\,dr\,dt\\
\leq&\,\frac{\varepsilon}{2}\int_0^T\int_a^b|\rho_r|^2\mathbbm{1}_{\{\rho<\Delta\}}\omega^2r^{n-1}\,dr\,dt+M\varepsilon\De^2.
\eeqas
Combining this with the estimate above for $J_5^2$, we obtain
$$|J_5|\leq \frac{\varepsilon}{2}\int_0^T\int_a^b|\rho_r|^2\mathbbm{1}_{\{\rho<\Delta\}}\omega^2r^{n-1}\,dr\,dt+M\varepsilon\De^2+M\sqrt{\eps}\big(1+\De^{4-\ga}\big).$$
 Thus, combining the estimates for $J_1,\ldots,J_5$  in \eqref{eq:antelope},
$$
\hspace{17.5mm}\eps\int_0^T\int_a^b\rho_r^2\mathbbm{1}_{\{\rho<\De\}}\om^2r^{n-1}\,dr\,dt\leq M\big(\sqrt{\eps}(1+\De^{4-\ga})+\frac{\De}{a}+\frac{\De^{3/2}}{\sqrt{\varepsilon}}\big).\hspace{17.5mm}\qedhere
$$
 \end{proof}

\begin{proof}[Proof of Lemma \ref{lemma:rho_r/rho}]
We let $\De\in(0,\half)$ and define the function $\psi(\rho)$ by
\beq
\psi(\rho)=\begin{cases}
\rho\log\rho-\rho, &\rho<\De,\\
\rho\log\De-\De, &\rho\geq\De,
\end{cases}
\eeq
so that \beqa
&\psi(\rho)-\rho\psi'(\rho)=-\min\{\rho,\De\},\\
&\psi''(\rho)=\frac{1}{\rho}\mathbbm{1}_{\{\rho<\De\}}.
\eeqa
Then from \eqref{eq:continuityequationtestfunction}, we obtain
\beqa\label{eq:kangaroo}
\eps\int_0^T\int_a^b&\frac{\rho_r^2}{\rho}\mathbbm{1}_{\{\rho<\De\}}\om^2r^{n-1}\,dr\,dt\\
=&\,-\int_a^b\psi\om^2r^{n-1}\Big|_0^T\,dr-\int_0^T\int_a^b\min\{\rho,\De\}u_r\om^2r^{n-1}\,dr\,dt\\
&+2\int_0^T\int_a^b\psi u\om\om_rr^{n-1}\,dr\,dt-\int_0^T\int_a^b\frac{n-1}{r}\min\{\rho,\De\}u\om^2r^{n-1}\,dr\,dt\\
&-2\eps\int_0^T\int_a^b \psi_r\om\om_rr^{n-1}\,dr\,dt\\
=&\,J_1+\cdots+J_5.
\eeqa
As $\rho|\log\rho|\mathbbm{1}_{\{\rho<\De\}}\leq\De|\log\De|$ for $\De\in(0,\half)$, we bound $J_1$ by
\beqs
|J_1|\leq M\sup_{t\in[0,T]}\int_a^b\big(\De|\log\De|+\rho|\log\De|\big)\om^2r^{n-1}\,dr\leq  M|\log\De|,
\eeqs
where we have employed the main energy estimate, Proposition \ref{prop:mainenergy}, and $\rho\leq M(1+\overline{h_\de}(\rho,\bar\rho))$, again by \eqref{ineq:star}.

$J_3$ is bounded similarly, using the estimate $\rho u\log\rho\leq \rho u^2+\rho(\log\rho)^2\leq \rho u^2+\De(\log\De)^2$ for $\rho<\De$, giving an estimate of $$|J_3|\leq M|\log\De|,$$ where $M$ depends on $\|\om_r\|_{L^\infty}$.

To control $J_2$, we again employ the main energy estimate and H\"older's inequality to obtain
\beqas
|J_2|\leq&\, \int_0^T\int_a^b \min\{\rho,\De\}|u_r|\om^2r^{n-1}\,dr\,dt\\
\leq&\, M\sqrt{\De}\Big(\int_0^T\int_a^b \rho|u_r|^2r^{n-1}\,dr\,dt\Big)^\half
\leq M\frac{\sqrt{\De}}{\sqrt{\eps}}.
\eeqas
For $J_4$, we use the Cauchy-Schwarz inequality to bound
\beqas
|J_4|\leq&\, \int_0^T\int_a^b\frac{n-1}{r}\min\{\rho,\De\}|u|\om^2r^{n-1}\,dr\,dt\\
\leq&\,M\frac{\sqrt{\De}}{a}\Big(\int_0^T\int_a^b \rho |u|^2r^{n-1}\,dr\,dt\Big)^\half\\
\leq&\, M\frac{\sqrt{\De}}{a}.
\eeqas
Finally, we break $J_5$ into two terms, one supported on the region $\{\rho<\De\}$ and the other on the region $\{\rho\geq\De\}$,
\beqas J_5=&-2\eps\int_0^T\int_a^b\big(\log\rho\mathbbm{1}_{\{\rho<\De\}}+\log\De\mathbbm{1}_{\{\rho\geq\De\}}\big)\rho_r\om\om_rr^{n-1}\,dr\,dt\\
=&\,J_5^1+J_5^2.
\eeqas
Estimating the first term, we use the Cauchy-Young inequality to bound
\beqas
|J_5^1|=&\,2\eps\,\Big|\int_0^T\int_a^b \log\rho\mathbbm{1}_{\{\rho<\De\}}\rho_r\om\om_rr^{n-1}\,dr\,dt\Big|\\
\leq&\, \frac{\eps}{2}\int_0^T\int_a^b \frac{\rho_r^2}{\rho}\mathbbm{1}_{\{\rho<\De\}}\om^2r^{n-1}\,dr\,dt+M\eps\int_0^T\int_a^b\rho(\log\rho)^2\mathbbm{1}_{\{\rho<\De\}}\om_r^2r^{n-1}\,dr\,dt\\
\leq&\,\frac{\eps}{2}\int_0^T\int_a^b \frac{\rho_r^2}{\rho}\mathbbm{1}_{\{\rho<\De\}}\om^2r^{n-1}\,dr\,dt+M\eps\\
\eeqas
where we have also applied the main energy estimate, Proposition \ref{prop:mainenergy}.

For the last term, we make the bound
\beqas
|J_5^2|=&\,2\eps\,\Big|\int_0^T\int_a^b\log\De\mathbbm{1}_{\{\rho\geq\De\}}\rho_r\om\om_rr^{n-1}\,dr\,dt\Big|\\
\leq&\,\eps|\log\De|\int_0^T\int_a^b |\rho_r|\mathbbm{1}_{\{\rho\geq\De\}}\om|\om_r|r^{n-1}\,dr\,dt\\
\leq&\,M|\log\De|\sqrt{\eps}\Big(\eps\int_0^T\int_a^b\max\{\De^{2-\ga},1\}\rho^{\ga-2}|\rho_r|^2\om(r)^2r^{n-1}\,dr\,dt\Big)^\half,\\
\leq&\,M\sqrt{\eps}|\log\De|(1+\De^{\frac{2-\ga}{2}})
\eeqas 
where we have again used the energy estimate of Proposition \ref{prop:mainenergy}.

Combining the estimates for $J_1,\ldots,J_5$ in \eqref{eq:kangaroo}, we conclude the proof.
\end{proof}

\section{Proof of Theorem \ref{theorem}}\label{sec:mainproof}
We begin by recalling the following theorem from \cite{ChenPerep2, ChenSchrecker}.
\begin{theorem}\label{theorem:convergence}
Let $(\rho_0,m_0)\in L^1_{loc}(\R_+)^2$, $\rho_0\geq0$, be of finite energy,
$$E[\rho_0,m_0]<\infty,$$
and suppose that for $\eps>0$, the parameters $\bar\rho(\eps)$, $\de(\eps)$, $b(\eps)$ satisfy
\beqs
\bar\rho^\ga b^n+\frac{\de}{\eps}b^n\leq M,
\eeqs
where $M$ is independent of $\eps$. Let $(\rho_0^\eps,m_0^\eps)$ with $\inf\rho_0^\eps=c_\eps>0$ (where $c_\eps$ does not need to be uniform in $\eps$) be smooth functions on $(a(\eps),b(\eps))$ such that
\begin{itemize}
\item $(\rho_0^\eps,m_0^\eps)\to(\rho_0,m_0)$ for almost every $r\in\R_+$ as $\eps\to0$, where we extend $(\rho_0^\eps,m_0^\eps)$ from $(a,b)$ to $\R_+$ by zero;
\item $(\rho_0^\eps,m_0^\eps)$ satisfies the boundary conditions \eqref{eq:approxbcs} as well as the  compatibility conditions:
\beqas
&\big(r^{n-1}m_0^\eps\big)_r\Big|_{r=a}=0,\\
&\Big(r^{n-1}m_{0,r}^\eps-\eps\big(r^{n-1}\rho_{0,r}^\eps\big)_r\Big)\Big|_{r=b}=\Big(r^{n-1}\Big(\frac{(m_0^\eps)^2}{\rho_0^\eps}+p_\de(\rho_0^\eps)\Big)_r-\eps \big(r^{n-1}m_0^\eps\big)_r\Big)\Big|_{r=b}=0;
\eeqas
\item $E[\rho_0^\eps,m_0^\eps]\to E[\rho_0,m_0]$ as $\eps\to0$.
\end{itemize}
Then there exist unique classical solutions $(\rho^\eps,m^\eps)$ of \eqref{eq:approx}--\eqref{eq:approxbcs} {\rm (}extended by $0$ to $\R^2_+${\rm )} which converge $(\rho^\eps,m^\eps)\to(\rho,m)$ almost everywhere in $\R_+^2$ and in $L^p_{loc}(\R^2_+)\times L^q_{loc}(\R^2_+)$ for $p\in[1,\ga+1)$ and $q\in[1,\frac{3(\ga+1)}{\ga+3})$.
\end{theorem}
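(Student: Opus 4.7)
The plan is to follow the vanishing artificial viscosity scheme of \cite{ChenPerep2, ChenSchrecker}, carried out within the compensated compactness framework for finite-energy data developed in \cite{LeFlochWestdickenberg, ChenPerep}. The argument proceeds in three stages: (i) construct smooth approximate solutions to the parabolic problem, (ii) extract a strongly convergent subsequence via $H^{-1}_{loc}$-compactness of entropy productions plus Young-measure reduction, and (iii) upgrade a.e.\ convergence to $L^p_{loc}$ convergence using higher integrability.

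For (i), since $\inf\rho_0^\eps=c_\eps>0$ and the smooth data satisfy the boundary and compatibility conditions at $r=a,b$, the system \eqref{eq:approx} is uniformly parabolic near $t=0$ and admits a unique local classical solution by standard linearisation and Banach fixed-point methods. A maximum-principle argument applied to the continuity equation preserves strict positivity of $\rho^\eps$ (with the lower bound possibly degenerating in $\eps$ and $T$), and Proposition \ref{prop:mainenergy} prevents blow-up on the compact interval $(a,b)$, so the solution extends to all of $[0,T]$.

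For (ii), given any Lions--Perthame--Tadmor weak entropy pair $(\eta,q)$, multiplying the equations of \eqref{eq:approx} by $(\eta_\rho,\eta_m)$ produces an approximate entropy identity
\beqs
\partial_t\eta(\rho^\eps,m^\eps)+\partial_r q(\rho^\eps,m^\eps)=R^\eps_{\mathrm{geom}}+R^\eps_{\mathrm{visc}}+R^\eps_\delta,
\eeqs
whose three contributions come respectively from the spherical source, the artificial viscosity, and the pressure correction $\delta\rho^2$. Proposition \ref{prop:mainenergy}, together with the assumed scaling of $(\bar\rho,\delta,b)$, keeps $R^\eps_{\mathrm{geom}}$ and $R^\eps_\delta$ uniformly bounded as locally finite Radon measures, while the viscous entropy dissipation bound decomposes $R^\eps_{\mathrm{visc}}$ into an $H^{-1}_{loc}$-compact piece and an $L^1_{loc}$-bounded piece. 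Murat's lemma then gives $H^{-1}_{loc}$-precompactness of $\partial_t\eta+\partial_r q$, and the Div--Curl lemma applied to two such entropy pairs yields the Tartar commutation relation for the Young measure $\nu_{t,r}$ generated by $(\rho^\eps,m^\eps)$. The reduction theorem of \cite{ChenPerep} then forces $\nu_{t,r}$ to be a Dirac mass almost everywhere, and hence $(\rho^\eps,m^\eps)\to(\rho,m)$ a.e.\ in $\R^2_+$.

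For (iii), combine the a.e.\ convergence with the equi-integrability inherent in the uniform estimates: Proposition \ref{prop:mainenergy} controls $\rho^\eps$ locally in $L^\gamma$, Lemma \ref{lemma:rho^3} controls it in $L^3_{loc}$, and an interpolation together with the bound on $m^\eps=\rho^\eps u^\eps$ and the finite-energy constraint delivers the stated $L^p_{loc}\times L^q_{loc}$-convergence through Vitali's theorem. The hardest step is the Young-measure reduction in (ii): it relies on the fine structure of the Lions--Perthame--Tadmor entropy kernel and a careful treatment of the vacuum set $\{\rho=0\}$, and this is the step where the full strength of the finite-energy framework is needed.
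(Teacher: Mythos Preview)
The paper does not supply its own proof of this theorem: it is explicitly \emph{recalled} from \cite{ChenPerep2, ChenSchrecker} at the start of \S\ref{sec:mainproof} and used as a black box on which the new estimates of \S\ref{sec:integrability}--\ref{sec:viscosity} are built. Your outline accurately summarises the strategy of those cited works---parabolic well-posedness with a maximum principle for positivity, $H^{-1}_{loc}$-compactness of entropy dissipation via Murat's lemma, Young-measure reduction in the finite-energy framework of \cite{LeFlochWestdickenberg, ChenPerep}, and Vitali's theorem for the $L^p_{loc}\times L^q_{loc}$ upgrade---so there is nothing to compare against here and nothing substantive to correct.
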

We strengthen the assumptions of Theorem \ref{theorem:convergence} by imposing assumption \eqref{ass:assumptionintro}, as well as the slightly stronger condition, guaranteed by appropriate choice of $a$,
\beq \label{ass:assumption}
\frac{\sqrt{\eps}}{a(\eps)}\to0 \text{ as } \eps\to0.
\eeq
Now we let $\varphi\in C_c^\infty([0,\infty)^2)$, multiply the first equation in \eqref{eq:approx} by $\varphi$ and integrate by parts on $[0,T]\times(a(\eps),b(\eps))$, using the boundary conditions \eqref{eq:approxbcs}, to obtain
\beqa\label{eq:weakformapproxcontinuity}
\int_0^T&\int_a^b \big(\rho^\eps(t,r)\varphi_t(t,r)+m^\eps(t,r)\varphi_r(t,r)\big)r^{n-1}\,dr\,dt+\int_a^b\rho^\eps_0(r)\varphi(0,r) r^{n-1}\,dr\\
=&\,\eps\int_0^T\int_a^b \rho^\eps_r(t,r)\varphi_r(t,r)r^{n-1}\,dr\,dt.
\eeqa
As $\varphi$ has compact support in $[0,\infty)^2$, we may apply the uniform bound of Lemma \ref{lemma:highinetgrabilityorigin} and the almost everywhere convergence  $(\rho^\eps,m^\eps)\to(\rho,m)$ to deduce that the left hand side of \eqref{eq:weakformapproxcontinuity} converges as $\eps\to0$ to 
\beqs
\int_0^T\int_a^b \big(\rho(t,r)\varphi_t(t,r)+m(t,r)\varphi_r(t,r)\big)r^{n-1}\,dr\,dt+\int_a^b\rho_0(r)\varphi(0,r) r^{n-1}\,dr.
\eeqs
To control the dissipative term, we distinguish between the two cases $\ga>2$ and $\ga\leq 2$. When $\ga\leq 2$, we make the estimate $\rho^{2-\ga}\leq\rho^\ga+1\leq M(\overline{h_\de}(\rho^\eps,\bar\rho)+1)$ and apply the Cauchy--Young inequality to see
\beqas
\Big|\eps&\int_0^T\int_a^b \rho^\eps_r(t,r)\varphi_r(t,r)r^{n-1}\,dr\,dt\Big|\\
\leq&\int_0^T\int_a^b\big(\eps^{3/2}(\rho^\eps)^{\ga-2}|\rho^\eps_r|^2+\eps^{1/2}\rho^{2-\ga}\big)|\varphi_r(t,r)|r^{n-1}\,dr\,dt\\
\leq&\, M\eps^{1/2}\Big(\eps\int_0^T\int_a^b(\rho^\eps)^{\ga-2}|\rho^\eps_r|^2r^{n-1}\,dr\,dt+\int_0^T\int_a^b (\overline{h_\de}(\rho^\eps,\bar\rho)+1)|\phi_r|r^{n-1}\,dr\,dt\Big)\\
\leq&\, M\eps^{1/2},
\eeqas
which tends to zero as $\eps\to0$.

On the other hand, when $\ga>2$, we fix some $\De\in(0,\half)$ (which does not change) and use Lemma \ref{lemma:rho_r} to estimate
\beqas
\Big|\eps&\int_0^T\int_a^b \rho^\eps_r(t,r)\varphi_r(t,r)r^{n-1}\,dr\,dt\Big|\\
\leq&\,\eps\int_0^T\int_a^b |\rho^\eps_r(t,r)\varphi_r(t,r)(\mathbbm{1}_{\{\rho<\De\}}+\mathbbm{1}_{\{\rho\geq\De\}})|r^{n-1}\,dr\,dt\\
\leq&\, M\Big(\eps^2\int_0^T\int_a^b |\rho^\eps_r|^2|\varphi_r|\mathbbm{1}_{\{\rho<\De\}}r^{n-1}\,dr\,dt\Big)^\half\\
&+\Big(\eps^2\De^{2-\ga}\int_0^T\int_a^b (\rho^\eps)^{\ga-2}|\rho^\eps_r|^2|\varphi_r|\mathbbm{1}_{\{\rho\geq \De\}}r^{n-1}\,dr\,dt\Big)^\half\\
\leq&\, M_\De(\frac{\eps}{a}+\sqrt{\eps})^{\half}\to0
\eeqas
as $\eps\to0$ by \eqref{ass:assumption}, where $M_\De$ depends on $\varphi_r$ and the fixed constant $\De$, thus demonstrating \eqref{eq:weakformcty}.

Let now $\varphi\in C_c^\infty([0,\infty)^2)$ be such that $\varphi(t,0)=0$ and take a sequence $\{\varphi^\eps\}_{\eps>0}$ in $C_c^\infty(\R^2_+)$, uniformly bounded in $W^{1,\infty}(\R^2_+)$, such that $\varphi^\eps\to\varphi$ strongly in $W^{1,p}(\R^2_+)$ for all $p<\infty$ and $\varphi^\eps(t,r)=0$ for $r\in[0,a(\eps)]$ and $t\in[0,T]$. We choose the sequence $\varphi^\eps$ such that the supports of the $\varphi^\eps$ are contained in a fixed compact set in $[0,\infty)^2$. We multiply the second equation in \eqref{eq:approx} by $\varphi^\eps$ and integrate by parts on $[0,T]\times(a(\eps),b(\eps))$, using the boundary conditions \eqref{eq:approxbcs} and $\varphi^\eps(t,a)=0$, to obtain
\beqa\label{eq:weakformapproxmomentum}
&\int_0^T\hspace{-0.5mm}\int_a^b \Big(m^\eps\varphi^\eps_t+\frac{(m^\eps)^2}{\rho^\eps}\varphi^\eps_r +p_\de(\rho^\eps)\big(\varphi^\eps_r+\frac{n-1}{r}\varphi^\eps\big)\Big)r^{n-1}\,dr\,dt\\&+\int_a^b\hspace{-0.5mm} m_0^\eps(r)\varphi^\eps(0,r)r^{n-1}\,dr
=\eps\int_0^T\int_a^b \Big(\big(r^{n-1}m^\eps\big)_r\varphi^\eps_{r}+\frac{n-1}{r}(r^{n-1}m^\eps)_r\varphi^\eps\Big)dr\,dt.
\eeqa
Then, using again Lemma \ref{lemma:highinetgrabilityorigin} and the uniform compact support of $\varphi^\eps$, we see the convergence
\beqas
\lim_{\eps\to0}&\int_0^T\int_a^b \Big(m^\eps\varphi^\eps_t+\frac{(m^\eps)^2}{\rho^\eps}\varphi^\eps_r +p_\de(\rho^\eps)\big(\varphi^\eps_r+\frac{n-1}{r}\varphi^\eps\big)\Big)r^{n-1}\,dr\,dt\\
&=\int_0^T\int_a^b \Big(m\varphi_t+\frac{m^2}{\rho}\varphi_r +p(\rho)\big(\varphi_r+\frac{n-1}{r}\varphi\big)\Big)r^{n-1}\,dr\,dt,
\eeqas
where for the final term, $p_\de(\rho^\eps)\frac{n-1}{r}\varphi^\eps r^{n-1}$, we note $|\varphi^\eps(t,r)|\leq r\|\varphi^\eps_r\|_{L^\infty}\leq Mr$, so 
$$p_\de(\rho^\eps)\frac{n-1}{r}\varphi^\eps r^{n-1}\leq M p_\de(\rho^\eps)r^{n-1}$$
on the support of $\varphi$. Hence by Lemma \ref{lemma:highinetgrabilityorigin}, this term is also equi-integrable and so converges.
Considering now the right hand side of \eqref{eq:weakformapproxmomentum}, we integrate by parts in the final term to see
\beqas
\eps\int_0^T\int_a^b \Big(\big(r^{n-1}m^\eps\big)_r\varphi^\eps_{r}&+\frac{n-1}{r}(r^{n-1}m^\eps)_r\varphi^\eps\Big)dr\,dt\\
&=\eps\int_0^T\int_a^b \big(m^\eps_r\varphi^\eps_r+\frac{n-1}{r^2}m^\eps\varphi^\eps\big)r^{n-1}dr\,dt.
\eeqas
For the last term, as $\rho^\eps\leq (\rho^\eps)^\ga+1$ and $\supp\,\varphi^\eps$ is compact, we note by Proposition \ref{prop:mainenergy} that
\beqas \Big|\eps\int_0^T\int_a^b\frac{n-1}{r^2}m^\eps\varphi^\eps r^{n-1}dr\,dt\Big|&\leq \eps\int_0^T\int_a^b(\rho^\eps+\rho^\eps|u^\eps|^2)r^{n-3}\varphi^\eps dr\,dt\\
&\leq M\eps a^{-2}+M\eps,\eeqas
which converges to 0 as $\eps\to0$ by \eqref{ass:assumption}.

For the remaining term, we apply H\"older's inequality to bound
\beqas
\Big|\eps\int_0^T\int_a^b &m^\eps_r\varphi^\eps_rr^{n-1}\,dr\,dt\Big|\\
=&\,\Big|\eps\int_0^T\int_a^b (\rho^\eps u^\eps_r+\rho^\eps_ru^\eps)\varphi^\eps_rr^{n-1}\,dr\,dt\Big|\\
\leq& \Big(\eps\int_0^T\int_a^b\rho^\eps |u^\eps_r|^2r^{n-1}\,dr\,dt\Big)^\half\Big(\eps\int_0^T\int_a^b\rho^\eps|\varphi^\eps_r|^2 r^{n-1}\,dr\,dt\Big)^\half\\
+&\Big(\eps^{\frac{3}{2}}\int_0^T\int_a^b\frac{(\rho_r^\eps)^2}{\rho^\eps}|\varphi^\eps_r|^2r^{n-1}\,dr\,dt\Big)^\half\Big(\eps^{\frac{1}{2}}\int_0^T\int_a^b\rho^\eps (u^\eps)^2r^{n-1}\,dr\,dt\Big)^\half
\eeqas
which converges to 0 as $\eps\to0$ by the main energy estimate, Proposition \ref{prop:mainenergy}, and Lemmas \ref{lemma:rho_r} and \ref{lemma:rho_r/rho},
thus demonstrating \eqref{eq:weakformmomentum} and hence concluding the proof of Theorem \ref{theorem}.

\appendix
\section{}
For the sake of clarity and the convenience of the reader, we provide here a derivation of the weak formulation for spherically symmetric gas dynamics and comment on the conditions at the origin, $r=0$. This derivation may also be found in, for example, \cite[Theorem 5.7]{Hoff}. We focus our exposition here on the momentum equations as similar considerations hold for the continuity equation. Recall from Definition \ref{def:multiDsoln} that the weak formulation, in $\R^n$, for the momentum equation is:\\
For each $j=1,\dots,n$, and $\varphi\in C_c^\infty([0,\infty)\times\R^n;\R)$,
\beqs
\int_0^\infty\int_{\R^n}\big(\rho\mathbf{u}^j\mathbf{\varphi}_t+(\rho\mathbf{u}^j\mathbf{u})\cdot \nabla_{\mathbf{x}}\mathbf{\varphi}+p(\rho)\partial_{x_j}\mathbf{\varphi}\big)\,d{\mathbf{x}}\,dt+\int_{\R^n}\rho_0\mathbf{u}^j_0(\mathbf{x})\mathbf{\varphi}(0,{\mathbf{x}})\,d{\mathbf{x}}=0,
\eeqs
where $\mathbf{u}^j$ denotes the $j$-th component of the vector field $\mathbf{u}$.

Thus for a spherically symmetric motion, as
\beqs
\rho(t,{\mathbf{x}})=\rho(t,r),\qquad \mathbf{u}(t,{\mathbf{x}})=u(t,r)\frac{{\mathbf{x}}}{|{\mathbf{x}}|},
\eeqs
we may re-write this weak formulation as follows. For the first term, we see that
\beqas
\int_0^\infty\int_{\R^n}\rho\mathbf{u}^j\mathbf{\varphi}_t\,d{\mathbf{x}}\,dt=&\,\int_0^\infty\int_0^\infty\int_{|\mathbf{x}|=r}\rho(t,r)u(t,r)\frac{x_j}{r}\varphi_t(t,\mathbf{x})\,dS_\mathbf{x}\,dr\,dt\\
=&\,\int_0^\infty\int_0^\infty\int_{|\mathbf{y}|=1}\rho(t,r)u(t,r){y_j}\varphi_t(t,r\mathbf{y})\,dS_{\mathbf{y}}r^{n-1}\,dr\,dt\\
=&\,\int_0^\infty\int_0^\infty\rho(t,r)u(t,r)\zeta_t(t,r)r^{n-1}\,dr\,dt,
\eeqas
where we have defined the new test function
$$\zeta(t,r)=\int_{|\mathbf{y}|=1}y_j\varphi(t,r\mathbf{y})\,dS_\mathbf{y}.$$
Similarly, we find that
$$\int_{\R^n}\rho_0\mathbf{u}^j_0(\mathbf{x})\mathbf{\varphi}(0,{\mathbf{x}})\,d{\mathbf{x}}=\int_0^\infty\rho_0(r)u_0(r)\zeta(0,r)r^{n-1}\,dr.$$
For the next term, we calculate
\beqas
\int_0^\infty\int_{\R^n}\rho\mathbf{u}^j\mathbf{u}\cdot \nabla_{\mathbf{x}}\mathbf{\varphi}\,d\mathbf{x}\,dt=&\,\int_0^\infty\int_0^\infty\int_{|\mathbf{y}|=1}\rho u^2y_j\mathbf{y}\cdot\nabla\varphi(t,r\mathbf{y})\,dS_\mathbf{y}r^{n-1}\,dr\,dt\\
=&\,\int_0^\infty\int_0^\infty\rho u^2\zeta_rr^{n-1}\,dr\,dt.
\eeqas
For the final term, we first make the observation that
\beqas
r^{n-1}\int_{|\mathbf{y}|=1}\varphi_{x_j}(t,r\mathbf{y})\,dS_\mathbf{y}=&\,\frac{\partial}{\partial r}\int_0^r\Big(\int_{|\mathbf{y}|=1}\varphi_{x_j}(t,\tilde r\mathbf{y})\,dS_\mathbf{y}\Big)\tilde r^{n-1}\,d\tilde r\\
=&\,\frac{\partial}{\partial r}\int_{|\mathbf{x}|\leq r}\varphi_{x_j}(t,\mathbf{x})\,d\mathbf{x}\\
=&\,\frac{\partial}{\partial r}\int_{|\mathbf{x}|=r}\varphi(t,\mathbf{x})\frac{x_j}{|\mathbf{x}|}\,dS_\mathbf{x}\\
=&\,\frac{\partial}{\partial r}(r^{n-1}\zeta).
\eeqas
Then we check
\beqas
\int_0^\infty\int_{\R^n}p(\rho)\partial_{x_j}\mathbf{\varphi}\,d{\mathbf{x}}\,dt=&\,\int_0^\infty\int_0^\infty\int_{|\mathbf{y}|=1}p(\rho)\varphi_{x_j}(t,r\mathbf{y})\,dS_\mathbf{y}r^{n-1}\,dr\,dt\\
=&\,\int_0^\infty\int_0^\infty p(\rho)(r^{n-1}\zeta)_r\,dr\,dt\\
=&\,\int_0^\infty\int_0^\infty p(\rho)\big(\zeta_r+\frac{n-1}{r}\zeta\big)r^{n-1}\,dr\,dt.
\eeqas
Putting all of these identities together, we obtain the equivalent weak formulation 
\beqas
\int_0^\infty\int_0^\infty \Big(\rho u\zeta_t+\rho u^2\zeta_r+p(\rho)\big(\zeta_r+&\frac{n-1}{r}\zeta\big)\Big)r^{n-1}\,dr\,dt\\&+\int_0^\infty \rho_0(r)u_0(r)\zeta(0,r)r^{n-1}\,dr=0
\eeqas
as stated in Definition \ref{def:sphsymsoln}.

Finally, we check the conditions on the test function $\zeta$ at the origin, $r=0$. One easily sees that
$$\zeta(t,0)=\int_{|\mathbf{y}|=1}y_j\varphi(t,0)\,dS_\mathbf{y}=0.$$
However, the radial derivative,
$$\zeta_r(t,0)=\int_{|\mathbf{y}|=1}y_j\nabla\varphi(t,0)\cdot\mathbf{y}\,dS_\mathbf{y}$$
may not be zero. For example, taking $\varphi(t,\mathbf{x})=x_j\chi(\mathbf{x})\psi(t)$ for some cut-off functions $\chi\in C_c^\infty(B_2(0))$ such that $\chi(\mathbf{x})=1$ on $B_1(0)$ and such that $\psi\in C_c^\infty([0,\infty))$, with $\psi=1$ on $[0,1]$, we obtain
$$\zeta_r(1,0)=\int_{|\mathbf{y}|=1}y_j\mathbf{e}_j\cdot\mathbf{y}\,dS_\mathbf{y}=\int_{|\mathbf{y}|=1}y_j^2\,dS_\mathbf{y}>0.$$


\bibliographystyle{amsplain}


\end{document}